\newcommand{\eee}{{\rm e}}
\newcommand{\me}{\mathbb{E}}
\newcommand{\mn}{\mathbb{N}}
\newcommand{\mr}{\mathbb{R}}
\DeclareMathOperator{\1}{\mathbbm{1}}
\newcommand{\ai}[1]{\textcolor{orange!80!black}{#1}}
\newcommand{\mmp}{\mathbb{P}}
\newtheorem{thm}{Theorem}[section]
\newtheorem{lemma}[thm]{Lemma}
\newtheorem{assertion}[thm]{Proposition}
\theoremstyle{definition}
\theoremstyle{remark}
\newtheorem{rem}[thm]{Remark}
\begin{document}
\title{First passage times for decoupled random walks}\date{}
\author{Alexander Iksanov\footnote{Faculty of Computer Science and Cybernetics, Taras Shevchenko National University of Kyiv, Ukraine; e-mail address:
iksan@univ.kiev.ua} \ \ Zakhar Kabluchko\footnote{Department of Mathematics and Computer Science, University of M\"{u}nster, Germany; e-mail address: zakhar.kabluchko@uni-muenster.de} \ \ and \ \ Vitali Wachtel\footnote{Faculty of Mathematics, University of Bielefeld, Germany; e-mail address: wachtel@math.uni-bielefeld.de}}
\maketitle

\begin{abstract}
Motivated by a connection to the infinite Ginibre point process, decoupled random walks were introduced in a recent article Alsmeyer, Iksanov and Kabluchko (2025). The decoupled random walk is a sequence of independent random variables, in which the $n$th variable has the same distribution as the position at time $n$ of a standard random walk with nonnegative increments. We prove distributional convergence in the Skorokhod space equipped with the $J_1$-topology of the  running maxima and the first passage times of decoupled random walks. We show that there exist five different regimes, in which distinct limit theorems arise. Rather different functional limit theorems for the number of visits of decoupled standard random walk to the interval $[0,t]$ as $t\to\infty$ were earlier obtained in the aforementioned paper Alsmeyer, Iksanov and Kabluchko (2025). While the limit processes for the first passage times are inverse extremal-like processes, the limit processes for the number of visits are stationary Gaussian.
\end{abstract}

\noindent Key words: decoupled random walk; extremal process;  first passage time; functional limit theorem; tail behavior.

\noindent 2020 Mathematics Subject Classification: Primary: 60F17, 60G55
\hphantom{2020 Mathematics Subject Classification: } Secondary: 60F10

\section{Introduction and main results}

Let $\xi_1$, $\xi_2,\ldots$ be independent copies of a nonnegative random variable $\xi$ with a nondegenerate distribution. Put $S_n=\xi_1+\ldots+\xi_n$ for $n\in\mn$. The random sequence
$(S_n)_{n\geq 1}$ is called {\it random walk} with nonnegative jumps. Let $\hat S_1$, $\hat S_2,\ldots$ be independent random variables such that, for each $n\in\mn$, $\hat S_n$ has the same distribution as $S_n$. Following \cite{Alsmeyer+Iksanov+Kabluchko:2025}, we call the sequence $(\hat S_n)_{n\geq 1}$ {\it decoupled random walk}.

For $t\geq 0$, put $$\hat N(t):=\sum_{n\geq 1}\1_{\{\hat S_n\leq t\}}\quad \text{and}\quad \hat \tau(t):=\inf\{n\geq 1: \hat S_n>t\}.$$ Observe that, for each $t\geq 0$, both $\hat N(t)$ and $\hat \tau(t)$ are almost surely  (a.s.) finite. The equality
$\mmp\{\hat N(t)<\infty\}=1$ is a consequence of $\sum_{n\geq 1}\mmp\{\hat S_n\leq t\}=\sum_{n\geq 1}\mmp\{S_n\leq t\}<\infty$. Furthermore, the inequality $\hat \tau(t)-1\leq \hat N(t)$ a.s. implies that $\hat\tau(t)$ is a.s.\ finite, too. % \ai{We note in passing} % It is worth mentioning 
%that for the classical walk $(S_n)_{n\geq 1}$ % $S_n$ 
%with nonnegative increments one has
%$\tau(t)-1= N(t)$, \ai{where $\tau(t):=\inf\{n\geq 1: S_n>t\}$ for $t\geq 0$}. 
The function $t\mapsto \sum_{n\geq 1}\mmp\{S_n\leq t\}$ on $[0,\infty)$ is called {\it renewal function}. It is a standard result of Renewal Theory that the renewal function is finite for all nonnegative arguments under the sole assumption $\mmp\{\xi=0\}<1$. We call the random processes $\hat N:=(\hat N(t))_{t\geq 0}$ and $(\hat \tau(t))_{t\geq 0}$ {\it decoupled renewal process} and {\it decoupled first passage time process}, respectively. We note in passing % It is worth mentioning
that for the classical walk $(S_n)_{n\geq 1}$ % $S_n$
with nonnegative increments one has
$\inf\{n\geq 1: S_n>t\}-1 %\tau(t)-1
=\sum_{n\geq 1}\1_{\{S_n\leq t\}}$. % N(t)$.} %, \ai{where $\tau(t):=\inf\{n\geq 1: S_n>t\}$ for $t\geq 0$}.

Some decoupled random walks arise naturally in an investigation of certain determinantal point processes. This connection has served as a motivation behind introducing the decoupled random walks in \cite{Alsmeyer+Iksanov+Kabluchko:2025}. Now we discuss this connection in more details. Let $\mathbb{C}$ denote the set of complex numbers. For $\rho>0$, define the kernel $C_\rho$ by $$C_\rho(z,w)=\frac{\rho}{2\pi}{\rm E}_{2/\rho,\,2/\rho}(z\bar w)\eee^{-|z|^\rho/2-|w|^\rho/2},\quad z,w\in \mathbb{C}.$$ Here, $\bar w$ denotes the complex conjugate of $w$ and, for $a,b>0$, ${\rm E}_{a,\,b}$ denotes the Mittag-Leffler function with parameters $a$ and $b$ given by $${\rm E}_{a,\,b}(z):=\sum_{k\geq 0}\frac{z^k}{\Gamma(ak+b)},\quad z\in \mathbb{C},$$ and $\Gamma$ is the Euler gamma-function. Denote by $\Theta_\rho$ a simple point process on $\mathbb{C}$ such that, for any $k\in\mn$ and any mutually disjoint Borel subsets $B_1,\ldots, B_k$ of $\mathbb{C}$, $$\me \Big[ \prod_{j=1}^k \Theta(B_j)\Big]=\int_{B_1\times\ldots\times B_k} {\rm det}(C_\rho(z_i, z_j))_{1\leq i,j\leq k}\,{\rm d}z_1\ldots {\rm d}z_k,$$ where ${\rm det}$ denotes the determinant. The point process $\Theta_\rho$ is a {\it determinantal point process} with kernel $C_\rho$ with respect to Lebesgue measure on $\mathbb{C}$. The process $\Theta_2$ (which corresponds to $\rho=2$) is known in the literature as the {\it infinite Ginibre point process}. We refer to the monograph \cite{Hough+Krishnapur+Peres+Virag:2009}, which contains a wealth of information on determinantal point processes. A discussion of the Ginibre point processes can be found in Sections 4.3.7 and 4.7 of that book and in Part I of the very recent monograph \cite{Byun+Forrester:2025}. It is stated on pp.~3-4 in \cite{Adhikari:2018} that the set of absolute values of atoms of $\Theta_\rho$ has the same distribution as $((\hat S_n)^{1/\rho})_{n\geq 1}$, where $\hat S_1$ has the gamma distribution with parameters $2/\rho$ and $1$, that is,
\begin{equation}\label{eq:gamma}
\mmp\{\hat S_1\in{\rm d}x\}=\frac{1}{\Gamma(2/\rho)}x^{2/\rho-1}\eee^{-x}\1_{(0,\infty)}(x){\rm d}x.
\end{equation}
For each $t\geq 0$, let $\Theta_\rho(D_t)$ denote the number of atoms of $\Theta_\rho$ inside the disk $D_t:=\{z\in \mathbb{C}: |z|<t\}$. Then
\begin{equation}\label{eq:principal}
(\Theta_\rho(D_t))_{t\geq 0}~~\text{has the same distribution as}~~(\hat N(t^\rho))_{t\geq 0}=\Big(\sum_{n\geq 1}\1_{\{\hat S_n\leq t^\rho\}}\Big)_{t\geq 0},
\end{equation}
with $\hat S_1$ as in \eqref{eq:gamma}. Also, we note, without going into details, that, according to Theorem 3.1 in \cite{Akemann+Strahov:2013}, the set of absolute values of atoms of a generalized infinite Ginibre point process parameterized by $m\in\mn$ has the same distribution as $((\hat S^{(1)}_n\hat S^{(2)}_{n}\cdot\ldots\cdot \hat S^{(m)}_n)^{1/2})_{n\geq 1}$, where $(\hat S^{(1)}_n)_{n\geq 1},\ldots, (\hat S^{(m)}_n)_{n\geq 1}$ are independent copies of $(\hat S_n)_{n\geq 1}$, with $\hat S_1$ having the exponential distribution of unit mean.

In \cite{Alsmeyer+Iksanov+Kabluchko:2025}, a functional limit theorem for $\hat N(t)$, properly scaled, normalized and centered, was proved under the assumption that the distribution of $\xi$ belongs to the domain of attraction of a stable distribution with finite mean. If the distribution of $\xi$ is exponential, the process $\hat N$ may be called {\it decoupled Poisson process}. A functional central limit theorem for the decoupled Poisson process was earlier obtained in Proposition 1.4 of \cite{Fenzl+Lambert:2022}. A law of the {\it single} logarithm for the decoupled Poisson process can be found in Theorem 3.1 of \cite{Buraczewski+Iksanov+Kotelnikova:2025}.

In this article we give a fairly complete picture of the asymptotic properties of the distributions of both $\max_{1\leq k\leq n}\,\hat S_k$ and $\hat \tau(t)$, properly scaled, normalized and centered. There are five  different regimes, which are regulated by the right tail of $\xi$. The first and second are treated in Theorems \ref{thm:02} and \ref{thm:02tau}; the third in Theorems \ref{thm:03} and \ref{thm:03tau}, the fourth in Theorems \ref{thm:05} and \ref{thm:05tau}, and the fifth (boundary regime) in Theorems \ref{thm:06} and \ref{thm:06tau}.

We do not see any natural applications of our present results to the point processes $\Theta_\rho$. The reason is that, as far as we understand, the absolute values of atoms of $\Theta_\rho$ cannot be enumerated in a useful way.

For an interval $I$, finite or infinite, open or closed, denote by $D(I)$ the Skorokhod space, that is, the set of c\`{a}dl\`{a}g functions defined on $I$. We shall use the standard $J_1$-topology on $D(I)$. Comprehensive information on the $J_1$-topology can be found in \cite{Billingsley:1968, Ethier+Kurtz:2005}. We write $\Longrightarrow$ to denote weak convergence in a functional space.

\begin{thm}\label{thm:02}
Assume that
\begin{equation}\label{eq:regvar}
\mmp\{\xi>v\}\sim v^{-\alpha}\ell(v),\quad v\to\infty
\end{equation}
for some $\alpha\in (0,2]$ and some $\ell$ slowly varying at $\infty$. Let $a(v)$ be any positive function satisfying $\lim_{v\to\infty}v^2\mmp\{\xi>a(v)\}=1$. Denote by $(t_k, j_k)$ the atoms of a Poisson random measure $\mathcal{P}_\alpha^{(1)}$ on $[0,\infty)\times (0,\infty]$ with mean measure $\theta\times\nu_\alpha$, where $\theta$ and $\nu_\alpha$ are measures on $[0,\infty)$ and $(0,\infty]$, respectively, defined by $$\theta ([0,\,x])=x^2/2\quad\text{and}\quad \nu_\alpha((x,\infty])=x^{-\alpha}.$$ If $\alpha\in (0,2)$ or $\alpha=2$ and $\lim_{v\to\infty}\ell(v)=\infty$, then
\begin{equation}\label{eq:basic}
\Big(\frac{\max_{1\leq k\leq \lfloor tv\rfloor}\,\hat S_k}{a(v)}\Big)_{t\geq 0}
~\Longrightarrow~
\left(\sup_{k:\,t_k \leq t}\, j_k\right)_{t\geq 0}=:(X_1(t))_{t\geq 0},\quad v\to\infty
\end{equation}
in the $J_1$-topology on $D([0,\infty))$, whereas, if $\alpha=2$ and $\liminf_{v\to\infty}\ell(v)<\infty$, then relation \eqref{eq:basic} holds with $\hat S_k-\mu k$ replacing $\hat S_k$, where $\mu:=\me [\xi]<\infty$. The one-dimensional distributions of the limit process are Fr\^{e}chet distributions given by
\begin{equation*} %\label{eq:marginal}
\mmp\{X_1(t)\leq y\} = \exp(-t^2 y^{-\alpha}/2),\quad y>0
\end{equation*}
and $\mmp\{X_1(t)\leq y\}=0$ for $y\leq 0$. %{\color{red} A realization of $(X_1(t))_{t\geq 0}$ is shown in the left panel of Figure~\ref{fig:x1-x2-realizations}.}
\end{thm}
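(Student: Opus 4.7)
The plan is to deduce the theorem from convergence of the point process
$$N_v := \sum_{n \geq 1} \delta_{(n/v,\,\hat S_n/a(v))}$$
to the Poisson random measure $\mathcal{P}_\alpha^{(1)}$, and then apply the continuous mapping theorem with the running-maximum functional. Since the $\hat S_n$ are mutually independent, $N_v$ is a superposition of independent single atoms, so by a standard Kallenberg-type criterion (see, e.g., Resnick, \emph{Extreme Values, Regular Variation, and Point Processes}) it suffices, on each state space of the form $[0,T]\times(\eps,\infty]$, to verify (a) vague convergence of the intensity $\me[N_v(\cdot)]$ to $\theta\times\nu_\alpha$ and (b) the null-array condition $\max_{n\leq Tv}\mmp\{\hat S_n>\eps\,a(v)\}\to 0$.

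The substantive work is the intensity computation. For a test rectangle $(s,t]\times(y,\infty]$,
$$\me\bigl[N_v\bigl((s,t]\times(y,\infty]\bigr)\bigr]=\sum_{sv<n\leq tv}\mmp\{S_n>y\,a(v)\}.$$
The central input is the one-big-jump asymptotic $\mmp\{S_n>x\}\sim n\,\mmp\{\xi>x\}$, valid uniformly in $n$ in the relevant range for regularly varying $\xi$; this is supplied by Nagaev/Fuk--Nagaev-type large-deviation bounds and, at the purely tail level, by the theory of subexponential sums (Denisov--Dieker--Shneer). Combining it with $\sum_{sv<n\leq tv}n\sim\tfrac12 v^2(t^2-s^2)$, the defining property $v^2\mmp\{\xi>a(v)\}\to 1$, and the uniform convergence theorem for slow variation yields the limit $\tfrac12(t^2-s^2)y^{-\alpha}=(\theta\times\nu_\alpha)\bigl((s,t]\times(y,\infty]\bigr)$. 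Condition (b) is immediate from $\mmp\{\hat S_n>\eps\,a(v)\}=O(n\,\mmp\{\xi>\eps\,a(v)\})=O(v\cdot v^{-2})\to 0$ uniformly in $n\leq Tv$.

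Given the point process convergence, the running-sup functional $m\mapsto(\sup_{k:\,t_k\leq t}j_k)_{t\in[0,T]}$ is continuous from the vague topology to the $J_1$-topology on $D([0,T])$ at point measures with pairwise distinct first coordinates, and such is a.s.\ the case for $\mathcal{P}_\alpha^{(1)}$ because $\theta$ is diffuse; the continuous mapping theorem then yields \eqref{eq:basic} on $D([0,T])$, and hence on $D([0,\infty))$ by a standard projective argument. The truncation parameter $\eps$ is removed by letting $\eps\downarrow 0$: on each $[\delta,T]$ with $\delta>0$ the running maximum is eventually dominated by atoms with mark $>\eps$ as soon as $\eps$ is small enough, a routine tightness check. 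The boundary case $\alpha=2$, $\liminf_{v\to\infty}\ell(v)<\infty$ is handled by running the entire argument with $\hat S_n-\mu n$ in place of $\hat S_n$: here the drift $\mu n$ for $n\leq Tv$ is of the same order as $a(v)\asymp v$ and would otherwise swamp the extremal behaviour, while the right tail of $S_n-\mu n$ still satisfies the one-big-jump asymptotic. I expect the main obstacle to be securing the uniformity in $n$ of $\mmp\{S_n>y\,a(v)\}\sim n\,\mmp\{\xi>y\,a(v)\}$ across the whole range $n\leq Tv$, particularly near $\alpha=2$, where the bulk/Gaussian contribution and the big-jump tail are of comparable magnitude and a careful Fuk--Nagaev bound is required to ensure that the tail dominates.
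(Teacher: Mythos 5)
Your proposal is correct and follows essentially the same route as the paper: establish weak convergence of the scaled point process $\sum_n\delta_{(n/v,\hat S_n/a(v))}$ to $\mathcal{P}_\alpha^{(1)}$, powered by a large-deviation/one-big-jump estimate for $\mmp\{S_n>x\}$, and then push through a running-sup functional that is a.s.\ continuous at the limiting Poisson measure. The only implementation difference is that you verify Poisson convergence via a Kallenberg--Grigelionis criterion for superpositions of independent single-atom measures (intensity convergence plus the null-array condition), whereas the paper computes Laplace functionals using Resnick, Proposition~3.19; since the two criteria are interchangeable in this independent setting, and the paper's negligibility estimate~\eqref{eq:neglig} is exactly your null-array bound, the substance is the same. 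Two small remarks: the paper sidesteps the issue of uniformity in $n$ that you flag by proving pointwise limits ($v\,\mmp\{S_{\lfloor tv\rfloor}>ya(v)\}\to ty^{-\alpha}$ for fixed $t,y$, Lemma~\ref{lem:tech}) and then applying dominated convergence over $t$, using Berger (2019) for $\alpha<2$ and a bare-hands Fuk--Nagaev/Bernstein argument for $\alpha=2$; and for the continuous-mapping step one needs, beyond distinct first coordinates, that a.s.\ every nonempty open time interval carries at least one atom (guaranteed here because $\nu_\alpha((0,\infty])=\infty$), which is what makes the running-sup map a.s.\ $J_1$-continuous at $\mathcal{P}_\alpha^{(1)}$ on all of $D([0,\infty))$ without further projective gluing.
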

A realization of $(X_1(t))_{t\geq 0}$ is shown in the left panel of Figure~\ref{fig:x1-x2-realizations}.

\begin{thm}\label{thm:03}
Assume that \eqref{eq:regvar} holds either for some $\alpha\in (2,3)$ and some $\ell$ slowly varying at $\infty$ or for $\alpha=3$ and slowly varying $\ell$ satisfying $\lim_{v\to\infty}(\ell(v)/\log v)=\infty$. Let $a(v)$ be any positive function satisfying $\lim_{v\to\infty}va(v)\mmp\{\xi>a(v)\}=1$. Denote by $(t_k, j_k)$ the atoms of a Poisson random measure $\mathcal{P}_\alpha^{(2)}$ on $\mr \times (0,\infty]$ with mean measure ${\rm Leb}\times\nu_\alpha$, where ${\rm Leb}$ denotes Lebesgue measure on $\mr$ and $\nu_\alpha$ is a measure $(0,\infty]$ defined by $\nu_\alpha((x,\infty])=x^{-\alpha}$. Then % If $\alpha\in (0,2)$ or $\alpha=2$ and $\lim_{v\to\infty}\ell(v)=\infty$, then
\begin{equation}\label{eq:basic03}
\Big(\frac{\max_{1\leq k\leq \lfloor v+ ta(v)\rfloor}\,\hat S_k-\mu v}{a(v)}\Big)_{t\in\mr}
~\Longrightarrow~
\left(\sup_{k:\,t_k \leq t}\,(\mu t_k+j_k)\right)_{t\in\mr}=:(X_2(t))_{t\in\mr},\quad v\to\infty
\end{equation}
in the $J_1$-topology on $D(\mr)$, where $\mu=\me [\xi]<\infty$. %, whereas, if $\alpha=2$ and $\liminf_{v\to\infty}\ell(v)<\infty$, then relation \eqref{eq:basic} holds with $\hat S_k-\me [\xi]k$ replacing $\hat S_k$.
The one-dimensional distributions of the limit process are (truncated) Fr\^{e}chet distributions given by
\begin{equation*}
\mmp\{X_2(t)\leq y\} = \exp\Big(-\frac{1}{\mu(\alpha-1)(y-\mu t)^{\alpha-1}}\Big),\quad t\in\mr,~ y>\mu t %_+
\end{equation*}
and $\mmp\{X_2(t)\leq y\}=0$ for $t\in\mr$ and $y\leq \mu t %_+
$. % {\color{red} A realization of $(X_2(t))_{t\in \mr}$ is shown in the right panel of Figure~\ref{fig:x1-x2-realizations}.}
\end{thm}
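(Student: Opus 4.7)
I would deduce Theorem~\ref{thm:03} from the vague convergence of the random point measure
\begin{equation*}
\mathcal{N}_v := \sum_{k\geq 1}\delta_{\bigl((k-v)/a(v),\,(\hat S_k-\mu k)/a(v)\bigr)}
\end{equation*}
on $\mr\times(0,\infty]$ to $\mathcal{P}_\alpha^{(2)}$. Once this is in hand, the functional in \eqref{eq:basic03} can be rewritten as $t\mapsto \max\{\mu u+j:(u,j)\in\mathcal{N}_v,\,(1-v)/a(v)\leq u\leq t\}$, and the convergence to $X_2$ follows from a continuous-mapping argument for the running-supremum functional in Skorokhod space, exactly as in classical one-dimensional extreme-value theory.

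\textbf{Point process convergence.} Because the $\hat S_k$ are independent, $\mathcal{N}_v$ is a superposition of independent point masses. By the Poisson limit theorem for triangular arrays it suffices to verify the intensity estimate
\begin{equation*}
\sum_{k:\,s_1 a(v)<k-v\leq s_2 a(v)}\Prob\{\hat S_k-\mu k>x a(v)\}\xrightarrow[v\to\infty]{}(s_2-s_1)\,x^{-\alpha}\qquad(s_1<s_2,\ x>0),
\end{equation*}
together with analogous joint estimates on rectangles. The analytic core is the one-big-jump (Nagaev/Heyde) asymptotic
\begin{equation*}
\Prob\{S_k-\mu k>x\,a(v)\}\sim k\,\Prob\{\xi>x\,a(v)\}\qquad(v\to\infty),
\end{equation*}
uniform in $k$ with $|k-v|\leq T a(v)$. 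For $\alpha\in(2,3)$, the defining relation $v a(v)\Prob\{\xi>a(v)\}\to 1$ combined with Karamata gives $a(v)/\sqrt{v}\to\infty$, so $xa(v)$ is safely inside the Nagaev big-jump zone; in the boundary case $\alpha=3$ the hypothesis $\ell(v)/\log v\to\infty$ is precisely what forces $a(v)\gg\sqrt{v\log v}$, restoring the same regime. Substituting the regular-variation tail and using $v a(v)\Prob\{\xi>a(v)\}\to 1$ once more yields $k\,\Prob\{\xi>x a(v)\}\sim x^{-\alpha}/a(v)$ for $k\sim v$, and summing over an index window of length $(s_2-s_1)a(v)$ delivers the stated intensity.

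\textbf{Continuous mapping and extension to $\mr$.} The running-supremum functional $\pi\mapsto \bigl(t\mapsto\sup\{\mu u+j:(u,j)\in\pi,\,s_1\leq u\leq t\}\bigr)$ is $J_1$-continuous on configurations in $[s_1,\infty)\times(0,\infty]$ with pairwise distinct time coordinates and only finitely many atoms in each slab $[s_1,T]\times[\eps,\infty]$; the process $\mathcal{P}_\alpha^{(2)}$ almost surely meets both conditions. Continuous mapping therefore gives convergence in $D([s_1,\infty))$ for every fixed $s_1$. To upgrade to $D(\mr)$ I would control
\begin{equation*}
\Prob\Bigl\{\max_{1\leq k\leq\lfloor v+s_1 a(v)\rfloor}(\hat S_k-\mu v)>y\,a(v)\Bigr\}
\end{equation*}
for arbitrary $y\in\mr$. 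Splitting $\hat S_k-\mu v=(\hat S_k-\mu k)+\mu(k-v)$ and applying the big-jump estimate on the range $v/2\leq k\leq v+s_1 a(v)$ reproduces the integral $\int_{-\infty}^{s_1}(y-\mu s)^{-\alpha}\,ds=\frac{1}{\mu(\alpha-1)}(y-\mu s_1)^{1-\alpha}$, which vanishes as $s_1\to-\infty$; the complementary range $k\leq v/2$ is dispatched by the crude bound $\Prob\{S_k>\mu v+y a(v)\}\leq C k\,\Prob\{\xi>\mu v/2\}$, whose sum is $O(v^{2-\alpha}\ell(v))=o(1)$ because $\alpha>2$. This matches the behaviour $X_2(-\infty)=-\infty$ a.s.\ and closes the extension to $D(\mr)$.

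\textbf{Main obstacle.} The hardest analytic step is the uniform validity of the one-big-jump asymptotic across the whole window $|k-v|\leq T a(v)$, particularly in the boundary case $\alpha=3$ where the classical Nagaev threshold $\sqrt{k\log k}$ is exactly critical and one must follow explicit constants to see that the hypothesis $\ell(v)/\log v\to\infty$ genuinely suffices. A secondary nuisance is the bookkeeping for atypically small indices $k\ll v$, which lie outside the regime of the point-process convergence yet must still be shown not to contribute spurious large values to the running maximum; this is what the Fuk--Nagaev-type sketch above is designed to handle.
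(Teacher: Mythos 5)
Your proposal is correct and follows essentially the same route as the paper: establish vague convergence of the point process $\sum_k\delta_{((k-v)/a(v),\,(\hat S_k-\mu k)/a(v))}$ (restricted to positive second coordinate) to $\mathcal{P}_\alpha^{(2)}$ via Nagaev's one--big--jump asymptotics, and then apply a continuous--mapping argument for the running--supremum functional. The only non-cosmetic variation concerns the passage to $D(\mr)$: you first obtain convergence on $D([s_1,\infty))$ and then argue tightness at $-\infty$ directly, estimating $\Prob\{\max_{k\le v+s_1 a(v)}(\hat S_k-\mu v)>y a(v)\}$, splitting off small indices $k\le v/2$ with a crude Fuk--Nagaev bound, and letting $s_1\to-\infty$. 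The paper instead packages this into the deterministic continuity Proposition~\ref{deterministic1}, which is stated for $I=\mr$ and whose hypothesis $\rho_0((r_1,r_2)\times(0,\infty])\ge 1$ for all $r_1<r_2$ automatically controls the contribution from the far left once one passes, via Skorokhod representation, from vague convergence in distribution to a.s.\ vague convergence. Your tightness argument is a legitimate substitute, at the cost of some extra bookkeeping; the paper's approach is more modular because the same deterministic lemma is reused in the other regimes. For the point--process step, your Poisson--limit--theorem criterion and the paper's Laplace--functional criterion (Proposition~3.19 of Resnick) are interchangeable. Finally, your observation that the defining relation $va(v)\Prob\{\xi>a(v)\}\to 1$ forces $a(v)\gg\sqrt{v\log v}$ (in particular via $\ell/\log\to\infty$ at $\alpha=3$) is exactly the paper's relation~\eqref{eq:large_a}, which is what places $xa(v)$ inside the Nagaev zone uniformly over $|k-v|\le T a(v)$.
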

A realization of $(X_2(t))_{t\in \mr}$ is shown in the right panel of Figure~\ref{fig:x1-x2-realizations}.

%%%%%%%%%%%%Figures
\begin{figure}[t]
  \centering
  \includegraphics[width=0.49\linewidth]{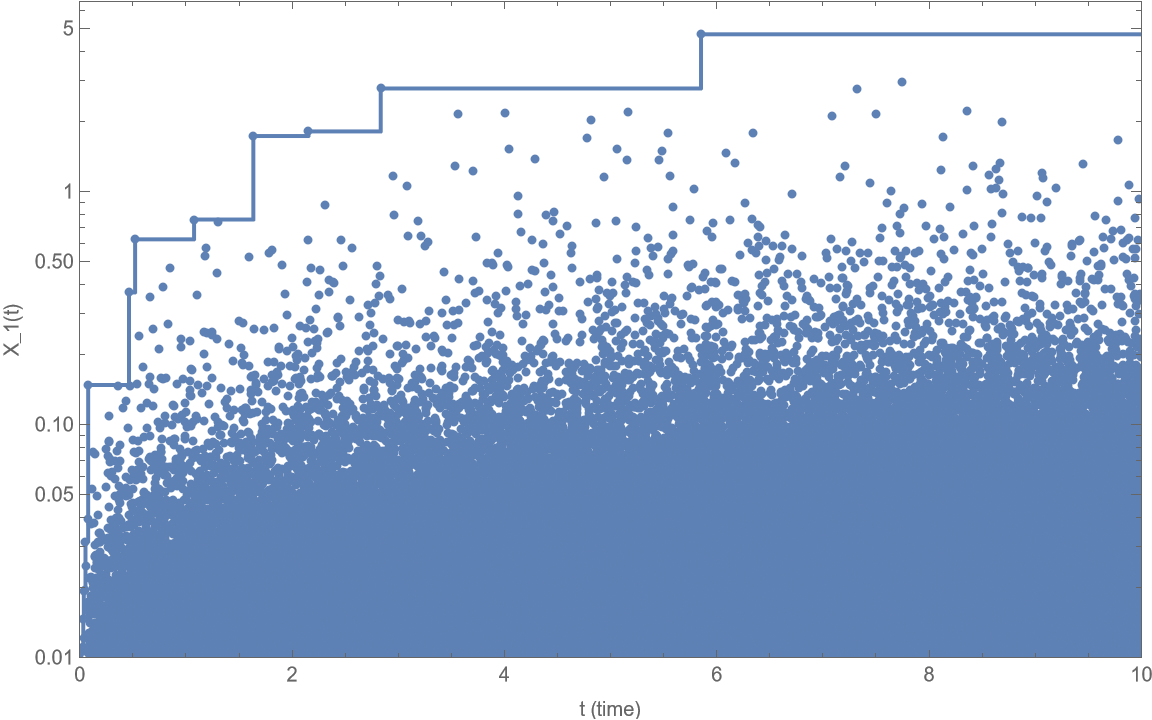}
  \includegraphics[width=0.49\linewidth]{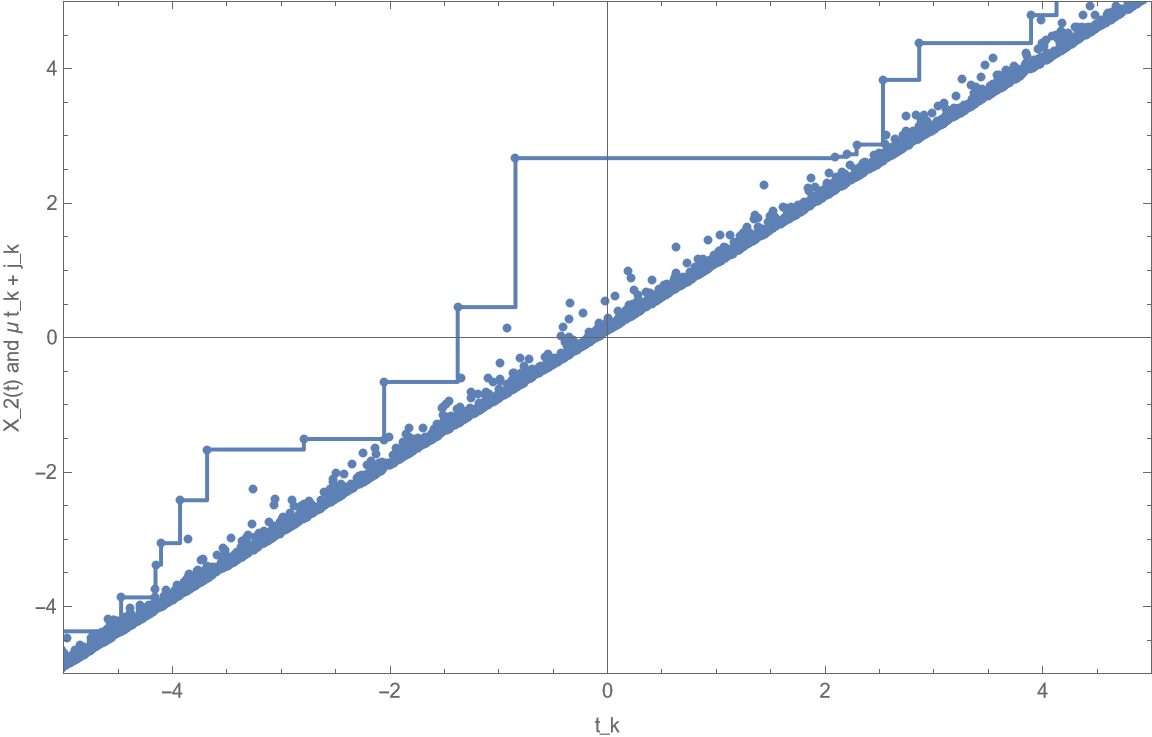}
%  \caption{Left: A realization of the process $(X_1(t))_{t\geq 0}$ appearing in Theorem~\ref{thm:02}. The vertical axis is shown on the log-scale.  Right: A realization of the process $(X_2(t))_{t\in \mathbb R}$ appearing in Theorem~\ref{thm:03}. The figure also shows the points $(t_k, \mu t_k+j_k)$.}
\caption{
Left: A sample path of the process $(X_1(t))_{t\ge 0}$ from Theorem~\ref{thm:02}, together with the points $(t_k,j_k)$ of the underlying Poisson random measure $\mathcal{P}_\alpha^{(1)}$. The vertical axis is displayed on a logarithmic scale.
Right: A sample path of the process $(X_2(t))_{t\in\mathbb{R}}$ from Theorem~\ref{thm:03}, together with the points $(t_k,\mu t_k+j_k)$.
}
\label{fig:x1-x2-realizations}
\end{figure}

Let $\Phi$ denote the standard normal distribution function, that is, $$\Phi(x):=(2\pi)^{-1/2}\int_{-\infty}^x \eee^{-y^2/2}{\rm d}y,\quad x\in\mr.$$ The function $\Phi$ is strictly increasing and continuous. Hence, its inverse $\Phi^{-1}$ is well-defined.
\begin{thm}\label{thm:05}
Assume that either $\me [\xi^3]<\infty$ or \eqref{eq:regvar} holds with $\alpha=3$ and some $\ell$ slowly varying at $\infty$ which satisfies $\lim_{v\to\infty}(\ell(v)/\log v)=0$. Put $a(v):=\sigma (v/\log v)^{1/2}$ and
$$
m(v):=\sigma(\mu v+v^{1/2}\Phi^{-1}(1-1/a(v)))
$$
for $v>1$, where $\mu=\me [\xi]<\infty$ and $\sigma^2:={\rm Var}\,[\xi]\in (0,\infty)$. Denote by $(t_k, j_k)$ the atoms of a Poisson random measure $\mathcal{P}^{(3)}$ on $\mr \times \mr$ with mean measure $\rho\times\nu$, where $\rho$ and $\nu$ are measures on $\mr$ defined by $\rho({\rm d}x):=\eee^{\mu x}{\rm d}x$ and $\nu({\rm d}y):=\eee^{-y}{\rm d}y$ for $x,y\in\mr$. Then
\begin{equation}\label{eq:basic05}
\Big(\frac{\max_{1\leq k\leq \lfloor v+ ta(v)\rfloor}\,\hat S_k- m(v)}{a(v)}\Big)_{t\in\mr}
~\Longrightarrow~
\left(\sup_{k:\,t_k \leq t}\,j_k\right)_{t\in\mr}=:(X_3(t))_{t\in\mr},\quad v\to\infty
\end{equation}
in the $J_1$-topology on $D(\mr)$.
The one-dimensional distributions of the limit process are (shifted) Gumbel distributions given by
\begin{equation*}
\mmp\{X_3(t)\leq y\} = \exp\Big(-\mu^{-1}\eee^{\mu t-y}\Big),\quad t,y\in\mr.
\end{equation*}
%{\color{red} A sample path of $(X_3(t))_{t\in \mr}$ is shown in the left panel of Figure~\ref{fig:x3-x4-realizations}.}
\end{thm}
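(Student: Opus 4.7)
The proof will follow the same point-process blueprint already used for Theorems~\ref{thm:02} and \ref{thm:03}. The plan is to show that the random point measure
\[
N_v \;:=\; \sum_{k \geq 1} \delta_{\bigl((k-v)/a(v),\,(\hat S_k-m(v))/a(v)\bigr)}
\]
converges in distribution, in the vague topology on point measures on $\mathbb{R}\times(-M,\infty]$ for every $M>0$, to the Poisson random measure $\mathcal{P}^{(3)}$ with intensity $e^{\mu x}\,\dd x\otimes e^{-y}\,\dd y$. Because the $\hat S_k$ are independent, $N_v$ is a sum of independent Dirac masses, and by Kallenberg's theorem the Poisson limit reduces to (i) convergence of the mean measure on a $\pi$-system generating the vague topology, together with (ii) the fact that each individual summand has probability $O(1/a(v))\to 0$ of falling in any bounded box. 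The functional statement \eqref{eq:basic05} will then follow from the continuous-mapping theorem applied to the supremum functional $\Psi(\mu)(t):=\sup\{y_k:(t_k,y_k)\in\mathrm{supp}(\mu),\,t_k\leq t\}$.

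\paragraph{Mean-measure computation.}
For a rectangle $[a,b]\times(y,\infty)$ write $k=v+sa(v)$, $Z_k:=(S_k-\mu k)/(\sigma\sqrt k)$ and $u_v:=\Phi^{-1}(1-1/a(v))$. Then
\[
\mathbb{P}\{\hat S_k>m(v)+ya(v)\}
=\mathbb{P}\!\left\{Z_k>u_v\sqrt{v/k}+(y-\mu s)\,\frac{a(v)}{\sigma\sqrt k}\right\}.
\]
Since $a(v)/v\to 0$ the first factor is $u_v(1+o(1))\sim\sqrt{\log v}$, while $a(v)/(\sigma\sqrt v)=1/\sqrt{\log v}$. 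The argument of the probability thus equals $u_v+(y-\mu s)/\sqrt{\log v}+o(1/\sqrt{\log v})$ uniformly for $s$ in a compact set. The key analytic input is a uniform Cram\'er-type moderate deviation estimate asserting $\mathbb{P}\{Z_k>x\}=(1-\Phi(x))(1+o(1))$ uniformly in $x\le c\sqrt{\log k}$ and $k\in[v-Ta(v),v+Ta(v)]$. Combining this with the Mills-ratio identity $1-\Phi(u_v+h)=(1-\Phi(u_v))\,e^{-u_vh-h^2/2}(1+o(1))$ and the definition $1-\Phi(u_v)=1/a(v)$ gives
\[
\mathbb{P}\{\hat S_k>m(v)+ya(v)\}=\frac{e^{\mu s-y}}{a(v)}\,(1+o(1))
\]
uniformly in $s\in[a,b]$. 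Summing over $k$ (a Riemann sum with spacing $1/a(v)$) produces
\[
\mathbb{E}[N_v([a,b]\times(y,\infty))]\;\longrightarrow\;e^{-y}\int_a^b e^{\mu s}\,\dd s,
\]
which is the mean measure of $\mathcal{P}^{(3)}$ on that rectangle. Joint convergence of means on disjoint rectangles and the vanishing of individual probabilities yields the Poisson limit for $N_v$.

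\paragraph{Continuous mapping and the main obstacle.}
The supremum map $\Psi$ is continuous at configurations whose first coordinates are pairwise distinct and whose restrictions to any compact $[-T,T]\times[-M,\infty)$ are finite; this holds a.s.\ for $\mathcal{P}^{(3)}$, and $\Psi(\mathcal{P}^{(3)})=X_3$. The past is handled by a standard truncation argument: because
$\rho\otimes\nu\bigl((-\infty,-T]\times(-M,\infty)\bigr)=\mu^{-1}e^{-\mu T}e^M\to 0$ as $T\to\infty$ for each fixed $M$, the contribution to $X_3$ from points with $t_k<-T$ is negligible in probability, and a parallel bound for the prelimit follows from the mean-measure estimate above. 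The one-dimensional distribution is then the void probability
$\mathbb{P}\{X_3(t)\leq y\}=\exp\bigl(-\int_{-\infty}^t e^{\mu x-y}\,\dd x\bigr)=\exp(-\mu^{-1}e^{\mu t-y})$. The principal obstacle is establishing the uniform moderate-deviation estimate at the scale $x\sim\sqrt{\log v}$, which is classical under $\mathbb{E}[\xi^3]<\infty$ (Petrov/Saulis--Statulevi\v{c}ius). In the boundary regime $\alpha=3$ with $\ell(v)/\log v\to 0$, where $\mathbb{E}[\xi^3]$ may be infinite, one has to couple $S_k$ with the truncated sum $S_k^{(A)}=\sum_{i\leq k}\xi_i\1_{\{\xi_i\le A\sqrt{k\log k}\}}$: the assumption $\ell(v)/\log v\to 0$ is calibrated precisely so that $k\,\mathbb{P}\{\xi>A\sqrt{k\log k}\}=o(1/a(v))$ for $k\asymp v$, which makes the truncation invisible at the relevant precision and reduces the problem to the classical moderate-deviation statement for the bounded-jump sum.
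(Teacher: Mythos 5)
Your plan — vague convergence of the rescaled point process to $\mathcal{P}^{(3)}$, followed by continuity of the running-supremum functional — is exactly the blueprint used in the paper (Proposition~\ref{prop:vague5} plus the Resnick p.~214 continuity argument). Your mean-measure computation, including the change of variable $k=v+sa(v)$, the expansion $u_v+(y-\mu s)/\sqrt{\log v}+o(1/\sqrt{\log v})$, the Mills-ratio step, and the resulting $e^{\mu s-y}/a(v)$ asymptotics, reproduces equation~\eqref{eq:inter3} and \eqref{eq:normal} in the paper's Proposition~\ref{prop:vague5}. The Kallenberg criterion you invoke (convergence of mean measures plus uniform smallness of individual hitting probabilities for a sum of independent single-atom processes) is an equivalent substitute for the Laplace-functional argument via Resnick's Proposition~3.19 that the paper uses; both are routine.

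The one place where you genuinely diverge is the analytic engine behind the tail asymptotics. Under $\me[\xi^3]<\infty$ you appeal to Petrov's non-uniform Berry--Esseen bound, which is exactly the paper's tool (Theorem~14, p.~125 of Petrov), and it does close: the relative error at $x\sim\sqrt{\log v}$ is $O(1/(\log v)^2)$. In the boundary case $\alpha=3$ with $\ell(v)/\log v\to 0$, however, the paper simply cites Nagaev's Theorem~1.9, which gives the uniform decomposition
$\mmp\{S_n-\mu n>x\}=\bigl(1-\Phi(xn^{-1/2})\bigr)(1+o(1))+n\,\mmp\{\xi>x\}(1+o(1))$
and then checks that the second term is asymptotically negligible, precisely because $\ell(v)/\log v\to 0$. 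You instead propose a truncation coupling at level $A\sqrt{k\log k}$. The idea is sound (it is essentially how Nagaev's theorem is proved), and your computation that the truncation event has probability $o(1/a(v))$ is correct. But the claim that this ``reduces the problem to the classical moderate-deviation statement for the bounded-jump sum'' hides a genuine piece of work: the truncated variables form a triangular array whose third absolute moment $\gamma_3^{(T_v)}$ grows with $v$ (for $\alpha=3$ one has $\gamma_3^{(T_v)}\asymp\int_1^{T_v}\ell(x)\,\dd x/x$, which need not be bounded), and you must check that the leading Edgeworth correction $\gamma_3^{(T_v)}(\log v)^{3/2}/\sqrt{v}$ still vanishes, and control the Edgeworth remainder for this $v$-dependent array. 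Under $\ell(v)/\log v\to 0$ these conditions do hold, but they are not automatic from any off-the-shelf ``classical'' moderate-deviation theorem; the verification would effectively reconstruct the relevant part of Nagaev's proof. The paper's choice to cite Nagaev directly is cleaner and avoids this. Apart from this, your argument is complete and correct in outline, and your computation of the marginal $\mmp\{X_3(t)\le y\}=\exp(-\mu^{-1}e^{\mu t-y})$ matches the paper.
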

A sample path of $(X_3(t))_{t\in \mr}$ is shown in the left panel of Figure~\ref{fig:x3-x4-realizations}.
\begin{rem}\label{rem:expansion}
It can be checked that $$\Phi^{-1}(1-h)=(2\log 1/h)^{1/2}-\frac{\log\log 1/h+\log 4\pi}{(8\log 1/h)^{1/2}}+o\Big(\frac{1}{(\log 1/h)^{1/2}}\Big),\quad h\to 0+.$$ This entails $$\Phi^{-1}(1-1/a(v))~=~
(\log v)^{1/2}-\frac{\log\log v}{(\log v)^{1/2}}-\frac{\log 2\pi}{(8\log v)^{1/2}}+o\Big(\frac{1}{(\log v)^{1/2}}\Big),\quad v\to\infty.$$ Thus, the centering $m(v)$ in Theorem \ref{thm:05} could have been replaced with $$\sigma \Big(\mu v+v^{1/2}\Big((\log v)^{1/2}-\frac{\log\log v}{(\log v)^{1/2}}-\frac{\log 2\pi}{(8\log v)^{1/2}}\Big)\Big).$$
\end{rem}

Finally, we give a result dealing with a boundary case with respect to the cases treated in Theorems \ref{thm:03} and \ref{thm:05}.
\begin{thm}\label{thm:06}
Assume that $\mmp\{\xi>v\}\sim Av^{-3}\log v$ as $v\to\infty$ for some $A>0$. Put $a(v)=((A/2)v\log v)^{1/2}$ and denote by $(t_k, j_k)$ the atoms of a Poisson random measure $\mathcal{P}_3^{(2)}$ (that is, $\mathcal{P}_\alpha^{(2)}$ with $\alpha=3$) restricted to $\mr\times ((2/A)^{1/2},\infty)$. Then, as $v\to\infty$,
\begin{equation}\label{eq:basic06}
\Big(\frac{\max_{1\leq k\leq \lfloor v+ ta(v)\rfloor}\,\hat S_k-\mu v}{a(v)}\Big)_{t\in\mr}
~\Longrightarrow~
\big(\max \big\{\mu t+ (2/A)^{1/2}, \sup_{k:\,t_k \leq t}\,(\mu t_k+j_k)\big\}\big)_{t\in\mr}=:(X_4(t))_{t\in\mr}
\end{equation}
in the $J_1$-topology on $D(\mr)$, where $\mu=\me [\xi]<\infty$. The one-dimensional distributions of the limit process are (truncated) Fr\^{e}chet distributions with atoms at $\mu t+(2/A)^{1/2}$ given by
\begin{equation*}
\mmp\{X_4(t)\leq y\} = \exp\Big(-\frac{1}{2\mu (y-\mu t)^2}\Big),\quad t\in\mr,~ y\geq \mu t+(2/A)^{1/2}
\end{equation*}
and $\mmp\{X_4(t)\leq y\}=0$ for $t\in\mr$ and $y<\mu t+(2/A)^{1/2}$. % {\color{red} A sample path of $(X_4(t))_{t\in \mr}$ is shown in the right panel of Figure~\ref{fig:x3-x4-realizations}.}
\end{thm}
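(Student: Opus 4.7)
The plan is to prove Theorem~\ref{thm:06} via the same two-step scheme that underlies Theorems~\ref{thm:02}--\ref{thm:05}: first establish vague convergence of the point process
$$N_v:=\sum_{k\geq 1}\delta_{((k-v)/a(v),\,(\hat S_k-\mu k)/a(v))}$$
on a suitable subset of $\mr\times(0,\infty]$, then deduce \eqref{eq:basic06} by applying the continuous mapping theorem to the running-supremum functional. The distinctive new feature of the boundary regime is that on the scale $a(v)=((A/2)v\log v)^{1/2}$ the Gaussian-type moderate deviations of $\hat S_k-\mu k$ and the one-big-jump large deviations contribute on exactly the same order, so the limit is the maximum of a Poisson-generated supremum and a deterministic floor at level $(2/A)^{1/2}$.

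The first step handles the regime above the floor. Fix $\eps>0$ and restrict $N_v$ to $\mr\times((2/A)^{1/2}+\eps,\infty)$. Using the uniform Nagaev-type expansion for partial sums with regularly varying tail of index $3$ and finite variance, namely
$$\mmp\{\hat S_k-\mu k>ya(v)\}=k\,\mmp\{\xi>ya(v)\}(1+o(1))+\overline\Phi\!\left(\frac{ya(v)}{\sigma\sqrt k}\right)(1+o(1)),$$
uniformly for $k=v+sa(v)$ with $s$ in compacts and $y$ above the floor (here $\overline\Phi:=1-\Phi$ and $\sigma^2:=\mathrm{Var}\,\xi<\infty$), substituting $\mmp\{\xi>u\}\sim Au^{-3}\log u$ into the first term and summing over a window $s\in[s_1,s_2]$ gives $(s_2-s_1)y^{-3}$, which matches the mean measure of $\mathcal P_3^{(2)}$; the Gaussian term is polynomially small in $v$ for $y$ above the floor and contributes $o(1)$ per window. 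Independence of $(\hat S_k)_{k\geq 1}$ then upgrades these mean computations, via Kallenberg's criterion or Laplace functionals, to vague convergence of $N_v|_{\mr\times((2/A)^{1/2}+\eps,\infty)}$ to $\mathcal P_3^{(2)}|_{\mr\times((2/A)^{1/2}+\eps,\infty)}$.

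The second step establishes the deterministic floor, namely $\max_{1\leq k\leq v+ta(v)}(\hat S_k-\mu k)/a(v)\xrightarrow{\mmp}(2/A)^{1/2}$ as $v\to\infty$. The upper bound follows from the above expansion at level $y=(2/A)^{1/2}+\eps$: after subtracting the bounded big-jump contribution captured by $\mathcal P_3^{(2)}$, the expected number of $k\leq v+ta(v)$ with $\hat S_k-\mu k>ya(v)$ tends to zero. The lower bound uses the Gaussian term at $y=(2/A)^{1/2}-\eps$, where the expected count diverges; independence of the $\hat S_k$'s then makes a second-moment/Bonferroni argument elementary. Letting $\eps\downarrow 0$ yields the claimed floor.

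Combining the two steps gives convergence of finite-dimensional distributions of $\big(\max_{k\leq v+ta(v)}(\hat S_k-\mu v)/a(v)\big)_{t\in\mr}$ to $(X_4(t))_{t\in\mr}$. Tightness in $(D(\mr),J_1)$ is automatic because these processes are nondecreasing in $t$ and the limit $X_4$ has no fixed jumps. The main obstacle is to ensure the Nagaev expansion of Step~1 is valid \emph{sharply} on a two-sided neighbourhood of the critical level $y=(2/A)^{1/2}$, where the Gaussian and heavy-tail contributions are of precisely the same order and the relative-error estimates commonly quoted in the literature become degenerate; a Cram\'er-series or Saulis--Statulevi\v{c}ius-type analysis, justified by $\me[\xi^2]<\infty$, should provide the required sharpness.
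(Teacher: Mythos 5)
Your high-level plan (point-process convergence above the floor, plus a deterministic floor argument, then combine) is the right one, but your Step~2 contains a false claim, and that false claim hides the genuinely delicate part of the proof.

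The assertion
$\max_{1\le k\le v+ta(v)}\bigl(\hat S_k-\mu k\bigr)/a(v)\xrightarrow{\mmp}(2/A)^{1/2}$
is not correct. The big-jump contribution over the full index range is \emph{not} bounded: for every fixed $y>0$,
\[
\sum_{k=1}^{v}\mmp\{S_k-\mu k>ya(v)\}
\;\sim\;\sum_{k=1}^{v}k\,\mmp\{\xi>ya(v)\}
\;\sim\;\frac{v^{2}}{2}\,\frac{A\log a(v)}{y^{3}a(v)^{3}}
\;\asymp\;y^{-3}\sqrt{\frac{v}{\log v}}\;\longrightarrow\;\infty ,
\]
and since the $\hat S_k$ are independent, the standard product/exponential bound then forces $\mmp\{\max_{k\le v}(\hat S_k-\mu k)>ya(v)\}\to1$, so $\max_{k\le v}(\hat S_k-\mu k)/a(v)\to\infty$ in probability. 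The Poisson random measure $\mathcal P_3^{(2)}$ only captures the big jumps in the finite window $(k-v)/a(v)\in[-T,T]$; outside that window there are unboundedly many exceedances of every level. What saves the theorem is that for such small~$k$ the centering term $\mu(k-v)/a(v)$ appearing in $(\hat S_k-\mu v)/a(v)$ is very negative and compensates. Your Step~2 detaches the floor from this compensating linear term, which is exactly where the argument collapses, and the phrase ``combining the two steps'' glosses over the real work.

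The paper's proof handles this by splitting the index set into $A_v(t)=\{k\le v+ta(v):\ \hat S_k-\mu k\le(2/A)^{1/2}a(v)\}$ and its complement $B_v(t)$ and studying $\mu(k-v)/a(v)+(\hat S_k-\mu k)/a(v)$ on each piece. On $A_v(t)$ this sum is trivially bounded above by $\mu t+(2/A)^{1/2}$, while the lower bound uses an exponential product estimate together with a Fatou argument that feeds on the fact that $a(v)\mmp\{S_{\lfloor v+xa(v)\rfloor}-\mu\lfloor v+xa(v)\rfloor>(y-\mu x)a(v)\}\to\infty$ whenever $y-\mu x<(2/A)^{1/2}$; on $B_v(t)$ the sum converges to the Poisson-generated supremum via Proposition~\ref{prop:vague6} and Proposition~\ref{deterministic1}. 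You need that simultaneous treatment of the linear term and the floor; a statement about $\max(\hat S_k-\mu k)/a(v)$ alone cannot work. Finally, the ``main obstacle'' you flag — needing a sharp Cram\'er/Saulis--Statulevi\v{c}ius expansion near the critical level — is a red herring: both the point-process step and the floor step only ever use the Nagaev asymptotics at fixed levels $y$ strictly above or strictly below $(2/A)^{1/2}$ (or the plain two-term formula at $y=(2/A)^{1/2}$), and the $\eps$-excursions are taken after the limit $v\to\infty$.
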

A sample path of $(X_4(t))_{t\in \mr}$ is shown in the right panel of Figure~\ref{fig:x3-x4-realizations}.

%%%%%%%%%%%%Figures
\begin{figure}[t]
  \centering
  \includegraphics[width=0.49\linewidth]{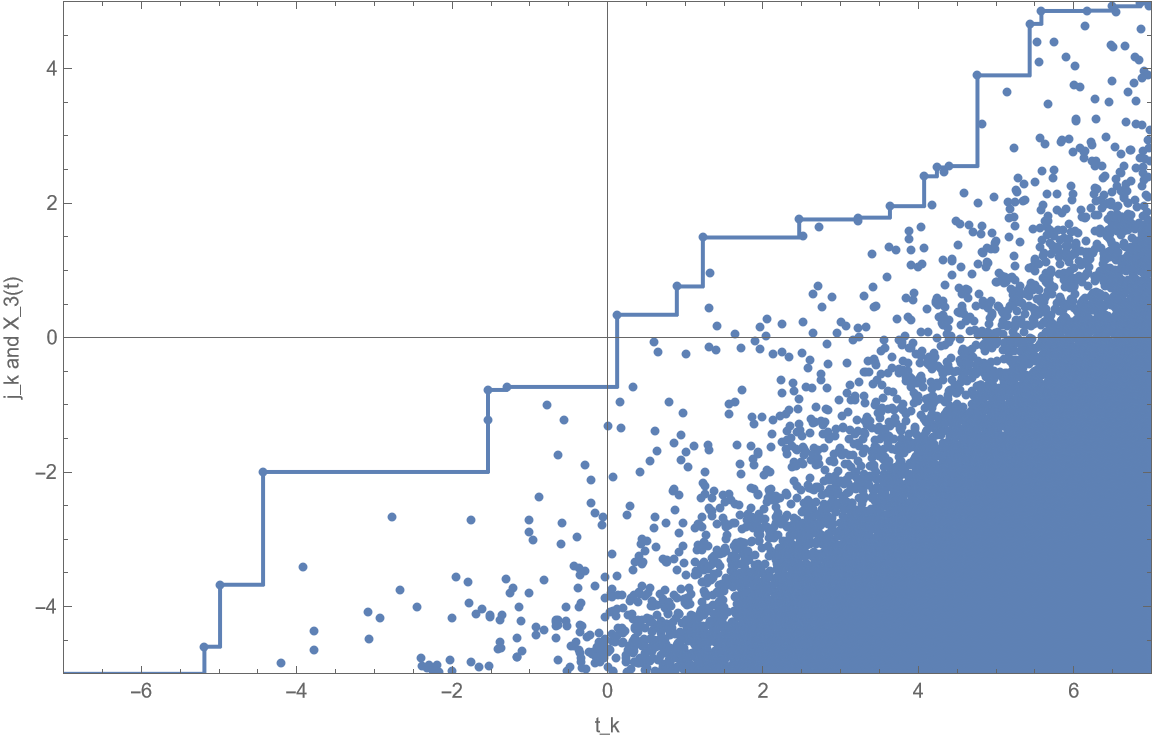}
  \includegraphics[width=0.49\linewidth]{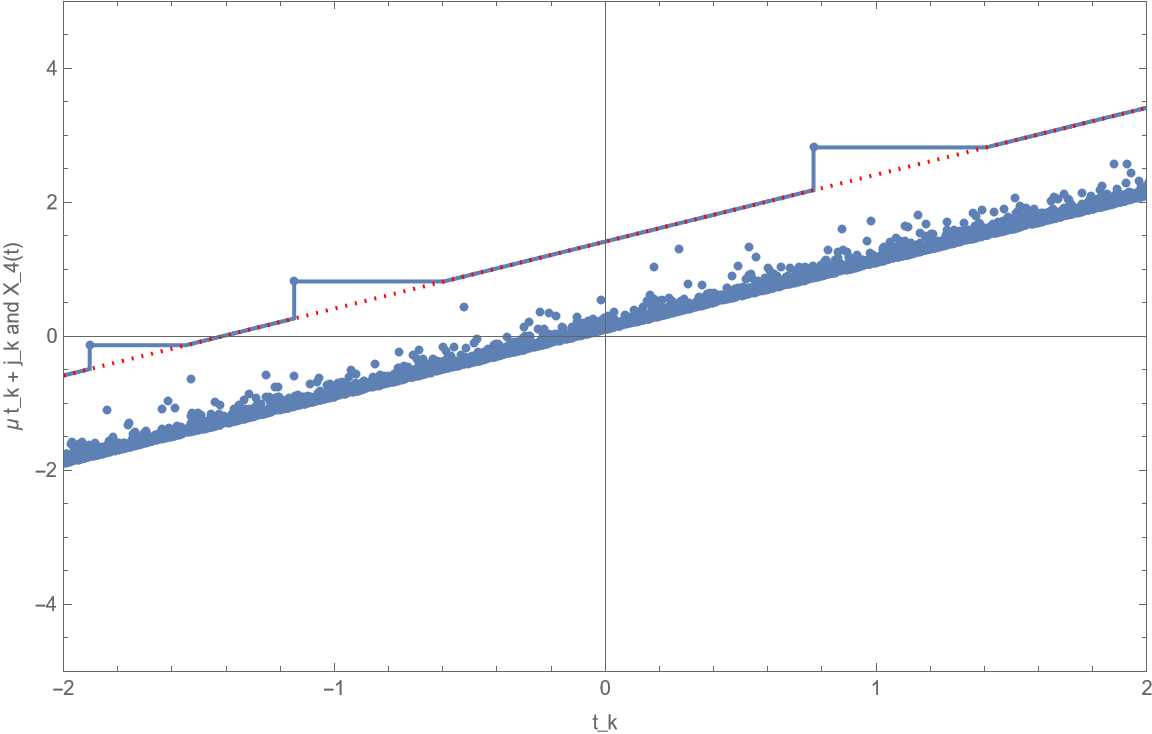}
\caption{
Left: A sample path of the process $(X_3(t))_{t\in \mr}$ from Theorem~\ref{thm:05}, together with the atoms $(t_k,j_k)$ of the underlying Poisson random measure $\mathcal{P}^{(3)}$.
Right: A sample path of the process $(X_4(t))_{t\in\mr}$ from Theorem~\ref{thm:06}, together with the points $(t_k,\mu t_k+j_k)$.
}
\label{fig:x3-x4-realizations}
\end{figure}

Now we present our results on the weak convergence of the first passage time processes. %{\color{red} Let $f^{\leftarrow}(y):=\inf\{x\in\mr:\ f(x)\ge y\}$,  $y\in\mr$, denote the generalized inverse of a monotone nondecreasing function $f$.}
For a monotone nondecreasing function $f: \mr\to\mr$ denote by $f^{\leftarrow}$ its generalized inverse, that is, $f^{\leftarrow}(y):=\inf\{x\in\mr:\ f(x)>y\}$ for $y\in\mr$.

\begin{thm}\label{thm:02tau}
Under the assumptions of Theorem \ref{thm:02}, if $\alpha\in (0,2)$ or $\alpha=2$ and $\lim_{v\to\infty}\ell(v)=\infty$, then
\begin{multline}\label{eq:inverseextremal}
\big((\mmp\{\xi>v\})^{1/2}\hat \tau(tv)\big)_{t\geq 0}~\Longrightarrow~
\left(X^\leftarrow_1(t)\right)_{t\geq 0}
:=\big(\inf\{z\geq 0: \sup_{k:\,t_k \leq z}\, j_k > t\}\big)_{t\geq 0}\\=
\left(\inf\{t_k:~ j_k > t\}\right)_{t\geq 0},\quad v\to\infty
\end{multline}
in the $J_1$-topology on $D([0,\infty))$, whereas, if $\alpha=2$ and $\liminf_{v\to\infty}\ell(v)<\infty$, then relation \eqref{eq:inverseextremal} holds with $\inf\{n\geq 1: \hat S_n-\mu n>t\}$ replacing $\hat \tau(t)$.
The one-dimensional distributions of the limit process are given by
\begin{equation}\label{eq:marginaltau}
\mmp\{X^\leftarrow_1(t)\leq y\} =1-\exp(-t^{-\alpha}y^2/2),\quad y>0
\end{equation}
and $\mmp\{X^\leftarrow_1(t)\leq y\}=0$ for $y\leq 0$. Thus, the distribution of $(X^\leftarrow_1(t))^2$ is exponential.
\end{thm}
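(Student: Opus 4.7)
The proof rests on the duality between the first passage time and the running maximum: $\hat\tau(t)=\inf\{n\geq 1: M_n>t\}$, where $M_n:=\max_{1\leq k\leq n}\hat S_k$. Setting $\eps(v):=(\mmp\{\xi>v\})^{1/2}$, $b(v):=1/\eps(v)$, and $W_v(s):=M_{\lfloor sb(v)\rfloor}/v$ for $s\geq 0$, the elementary observation $\lfloor sb(v)\rfloor\geq n \iff s\geq n/b(v)$ yields the pointwise identity
\begin{equation*}
\eps(v)\,\hat\tau(tv) \;=\; W_v^{\leftarrow}(t),\qquad t\geq 0.
\end{equation*}
The strategy is therefore to transfer Theorem \ref{thm:02} to the rescaled process $W_v$ and then to apply a continuous mapping theorem for generalized inverses of monotone càdlàg functions.

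For the transfer, the normalizing function $a$ of Theorem \ref{thm:02} is regularly varying of index $2/\alpha$ with $v^2\mmp\{\xi>a(v)\}=1$; by inversion of regularly varying functions, $a(b(v))\sim v$ as $v\to\infty$. Substituting $v\mapsto b(v)$ in Theorem \ref{thm:02} and replacing $a(b(v))$ by its asymptote $v$ (at the cost of a deterministic factor converging to $1$) yields $W_v\Longrightarrow X_1$ in the $J_1$-topology on $D([0,\infty))$. The centering case ($\alpha=2$, $\liminf\ell(v)<\infty$) is handled identically with $\hat S_k-\mu k$ throughout, since the same duality applies between $\inf\{n\geq 1:\hat S_n-\mu n>t\}$ and $\max_{1\leq k\leq n}(\hat S_k-\mu k)$.

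Both $W_v$ and $X_1$ take values in the cone of nondecreasing càdlàg functions, and $X_1(t)\to\infty$ a.s.\ because the mean measure $\theta\otimes\nu_\alpha$ places infinite mass on every strip $[0,\infty)\times(T,\infty]$, so $\mathcal{P}_\alpha^{(1)}$ has atoms of arbitrarily large ordinate. On the cone of nondecreasing càdlàg $f\colon[0,\infty)\to[0,\infty)$ with $f(\infty)=\infty$, the generalized inverse map $f\mapsto f^{\leftarrow}$ is $J_1$-continuous (Section~13.6 of Whitt, \emph{Stochastic-Process Limits}). Continuous mapping then gives $\eps(v)\hat\tau(tv)=W_v^{\leftarrow}(t)\Longrightarrow X_1^{\leftarrow}(t)$ in $J_1$ on $D([0,\infty))$. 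The identification $X_1^{\leftarrow}(t)=\inf\{t_k:j_k>t\}$ is immediate from $X_1(s)=\sup_{k:t_k\leq s}j_k$, and the Poisson avoidance formula gives
\begin{equation*}
\mmp\{X_1^{\leftarrow}(t)>y\}=\mmp\bigl\{\mathcal{P}_\alpha^{(1)}([0,y]\times(t,\infty])=0\bigr\}=\exp\bigl(-\theta([0,y])\,\nu_\alpha((t,\infty])\bigr)=\exp(-y^2t^{-\alpha}/2),
\end{equation*}
which is \eqref{eq:marginaltau}; the distribution of $(X_1^{\leftarrow}(t))^2$ is then exponential with rate $t^{-\alpha}/2$.

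The delicate step is the continuity of the inverse map at the pure-jump limit $X_1$: the plateaus of $X_1$ correspond to jumps of $X_1^{\leftarrow}$, and one needs to know that $J_1$-small perturbations of the $W_v$-trajectories cannot reshuffle these features in a way that destroys inversion. This is exactly what the cited continuous-mapping theorem for inverses on $D_\uparrow$ provides, via the graph-based characterization of $J_1$-convergence on monotone functions. A self-contained alternative is to derive finite-dimensional convergence from the dual events $\{\eps(v)\hat\tau(t_iv)>y_i\}=\{W_v(y_i)\leq t_i\}$, whose limits, after handling boundary cases (the marginals of $X_1$ are continuous), match $\mmp\{X_1^{\leftarrow}(t_i)>y_i\}=\mmp\{X_1(y_i)\leq t_i\}$, combined with monotone tightness in $J_1$ extracted from tail estimates for $\hat\tau(tv)$.
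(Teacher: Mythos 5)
Your setup — the duality $\eps(v)\hat\tau(tv)=W_v^{\leftarrow}(t)$, the substitution $v\mapsto b(v)=(\mmp\{\xi>v\})^{-1/2}$ giving $a(b(v))\sim v$, and the Poisson avoidance computation for the marginal — is correct and matches the paper. The gap is at the crucial step: you assert that the generalized inverse $f\mapsto f^{\leftarrow}$ is $J_1$-continuous on the cone of nondecreasing càdlàg functions with $f(\infty)=\infty$, citing Section~13.6 of Whitt's \emph{Stochastic-Process Limits}. This is false in the generality needed here, and the paper says so explicitly: Whitt (1971) proves $M_1$-continuity of the inverse on that cone, and gives a counterexample to $J_1$-continuity (p.~419 of that paper). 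The $J_1$ versions of Whitt's inverse-map results apply when the limit is continuous and strictly increasing, so that its inverse has no jumps; here $X_1$ is a pure-jump process with plateaus, so $X_1^{\leftarrow}$ has jumps, and the general continuous-mapping argument does not apply. (Your own phrase ``graph-based characterization of $J_1$-convergence'' betrays the confusion: it is $M_1$, not $J_1$, that is defined via completed graphs.)

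Because of this, an extra argument is indispensable. The paper's route is two-step: first deduce $M_1$-convergence of $(\eps(v)\hat\tau(tv))_{t\geq 0}$ from Whitt's $M_1$-continuity, then lift it to $J_1$-convergence by inspecting the discontinuities. Concretely, every jump of $X_1^{\leftarrow}$ at a level $j'$ comes from a constancy interval $[t',t'')$ of $X_1$, bounded by two atoms $(t',j')$ and $(t'',j'')$ of $\mathcal{P}_\alpha^{(1)}$; by the vague convergence in Proposition~\ref{prop:vague} there are atoms $(t'(v),j'(v))$, $(t''(v),j''(v))$ of $\sum_\ell\varepsilon_{(\ell/v,\hat S_\ell/a(v))}$ converging to them, so the prelimit inverse eventually has a single jump at $j'(v)\to j'$ with left/right limits $t'(v)\to t'$, $t''(v)\to t''$. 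This verifies the hypothesis of Lemma~2.4(b) in Iksanov, Pilipenko and Samoilenko (2017), which upgrades $M_1$-convergence to $J_1$-convergence. Your ``self-contained alternative'' (fdd convergence plus ``monotone tightness in $J_1$'') does not close the gap either: for monotone processes, fdd convergence essentially delivers $M_1$-tightness for free, but $J_1$-tightness requires ruling out jump-splitting, which is precisely the content of the lifting argument you would need to supply.
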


\begin{thm}\label{thm:03tau}
Under the assumptions of Theorem \ref{thm:03}, as $v\to\infty$,
\begin{multline} \label{eq:inverseextremal3}
\Big(\frac{\hat \tau(v+ta(\mu^{-1}v))-\mu^{-1}v}{\mu^{-1/(\alpha-1)}a(v)}\Big)_{t\in\mr}~\Longrightarrow~
\left(X^\leftarrow_2(t)\right)_{t\in\mr}
:=\big(\inf\{z\in\mr: \sup_{k:\,t_k \leq z}\,\big(\mu t_k+j_k\big)> t\}\big)_{t\in\mr}\\=
\left(\inf\{t_k:~%(
\mu t_k+j_k%)
> t\}\right)_{t\in\mr}
\end{multline}
in the $J_1$-topology on $D(\mr)$. The one-dimensional distributions of the limit process are given by
\begin{equation}\label{eq:marginaltau03}
\mmp\{X^\leftarrow_2(t)\leq y\} =1-\exp\Big(\frac{1}{\mu(\alpha-1)(t-\mu y)^{\alpha-1}}\Big),\quad t\in\mr,~ y<\mu^{-1} t
\end{equation}
and $\mmp\{X^\leftarrow_2(t)\leq y\}=1$ for $t\in\mr$ and $y\geq \mu^{-1}t$.
\end{thm}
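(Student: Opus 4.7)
The plan is to derive Theorem \ref{thm:03tau} from Theorem \ref{thm:03} by inverting the running maximum process. First, setting $u:=\mu^{-1}v$, I rewrite $v+ta(\mu^{-1}v)=\mu u+ta(u)$ and $\mu^{-1}v=u$. Since $\mmp\{\xi>v\}\sim v^{-\alpha}\ell(v)$ together with the defining relation $va(v)\mmp\{\xi>a(v)\}\to 1$ forces $a$ to be regularly varying of index $1/(\alpha-1)$, one has $a(u)/a(v)=a(\mu^{-1}v)/a(v)\to \mu^{-1/(\alpha-1)}$ as $v\to\infty$. By Slutsky, \eqref{eq:inverseextremal3} is equivalent to
\begin{equation*}
\Big(\frac{\hat\tau(\mu u+ta(u))-u}{a(u)}\Big)_{t\in\mr}\Longrightarrow (X_2^\leftarrow(t))_{t\in\mr}
\end{equation*}
in the $J_1$-topology on $D(\mr)$ as $u\to\infty$.

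The key observation is that $\hat\tau$ is the right-continuous generalized inverse of the nondecreasing process $M_n:=\max_{1\leq k\leq n}\hat S_k$: namely $\hat\tau(y)=\inf\{n\geq 1: M_n>y\}$. Setting $Y_u(s):=(M_{\lfloor u+sa(u)\rfloor}-\mu u)/a(u)$ for $s\in\mr$, Theorem \ref{thm:03} yields $Y_u\Longrightarrow X_2$ in $D(\mr)$ with $J_1$. A direct unwinding of the definitions (using that $\lfloor u+sa(u)\rfloor\geq n$ iff $u+sa(u)\geq n$ for integer $n$) gives
\begin{equation*}
Y_u^\leftarrow(t):=\inf\{s\in\mr: Y_u(s)>t\}=\frac{\hat\tau(\mu u+ta(u))-u}{a(u)}.
\end{equation*}
It therefore remains to show $Y_u^\leftarrow\Longrightarrow X_2^\leftarrow$ in $D(\mr)$, which I would obtain via the continuous-mapping theorem for generalized inverses of nondecreasing càdlàg functions (as developed, e.g., in Whitt's monograph or in Resnick's treatment of extremal processes).

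The main obstacle is verifying that $f\mapsto f^\leftarrow$ is continuous at $X_2$ on $D(\mr)$ almost surely. This requires two properties of $X_2$, both of which hold with probability one: its plateau heights are pairwise distinct, which follows from the diffuseness of the mean measure $\mathrm{Leb}\times\nu_\alpha$ of $\mathcal{P}^{(2)}_\alpha$; and $X_2(\pm\infty)=\pm\infty$. The limit $X_2(t)\to+\infty$ as $t\to+\infty$ is immediate from the existence of arbitrarily large atoms $j_k$ with $t_k\geq 0$, while $X_2(t)\to-\infty$ as $t\to-\infty$ follows from the Poisson estimate $\mmp\{X_2(t)>-K\}\leq \int_{-\infty}^t\nu_\alpha((-K-\mu s,\infty])\,\dd s=(\mu(\alpha-1))^{-1}(-K-\mu t)^{-(\alpha-1)}\to 0$ as $t\to-\infty$ for each $K>0$. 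Convergence on $D(\mr)$ is then extracted from convergence on $D([-T,T])$ for a sequence $T\to\infty$ using monotonicity. Finally, the marginal distribution follows from the void-probability calculation
\begin{equation*}
\mmp\{X_2^\leftarrow(t)>y\}=\exp\Big(-\int_{-\infty}^{y}\nu_\alpha\bigl((t-\mu s,\infty]\bigr)\,\dd s\Big)=\exp\Big(-\frac{1}{\mu(\alpha-1)(t-\mu y)^{\alpha-1}}\Big)
\end{equation*}
for $y<\mu^{-1}t$, from which \eqref{eq:marginaltau03} is read off.
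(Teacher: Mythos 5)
Your reduction via Slutsky, the identity $Y_u^\leftarrow(t)=(\hat\tau(\mu u+ta(u))-u)/a(u)$, and the marginal computation are all fine, but the central step — that $f\mapsto f^\leftarrow$ is continuous at $X_2$ in the $J_1$-topology on the strength of the two listed properties of $X_2$ — is exactly the point at which the paper says the naive argument fails. As the authors note in the proof of Theorem~\ref{thm:02tau}, Whitt~\cite{Whitt:1971} proved that the generalized-inverse map is continuous in the $M_1$-topology but \emph{not} in the $J_1$-topology (counterexample on p.~419), and distinct plateau heights together with $f(\pm\infty)=\pm\infty$ do not rescue it. Concretely, take $f(t)=t$ for $t<0$, $f(t)=1$ for $0\le t<1$, $f(t)=t+1$ for $t\ge 1$, and $f_n$ equal to $f$ except on $[1/2,1)$ where $f_n\equiv 1+1/n$. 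Then $\|f_n-f\|_\infty\to 0$, so $f_n\to f$ in $J_1$, $f$ has a single plateau and $f(\pm\infty)=\pm\infty$; yet $f_n^\leftarrow$ has two non-vanishing jumps (at $y=1$, of size $1/2$, and at $y=1+1/n$, of size $1/2$) which merge as $n\to\infty$, so $f_n^\leftarrow\to f^\leftarrow$ only in $M_1$, not in $J_1$. Passing through $Y_u\Rightarrow X_2$ in $J_1$ alone therefore does not suffice: $J_1$-convergence of $Y_u$ cannot preclude $Y_u$ having extra small upward increments in the interior of a constancy interval of $X_2$, and those are exactly what break the $J_1$-convergence of the inverses.

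What rules this pathology out in the actual model is the stronger point-process information of Proposition~\ref{prop:vague3}, which the paper uses directly. Their route is: (i) the inverse map \emph{is} $M_1$-continuous (Whitt), so Proposition~\ref{prop:vague3} together with the argument of Theorem~\ref{thm:03} gives $M_1$-convergence of the first-passage processes; (ii) this is then upgraded to $J_1$-convergence via Lemma~2.4(b) of~\cite{Iksanov+Pilipenko+Samoilenko:2017}, by matching each jump of $X_2^\leftarrow$ with two specific atoms $(t',j')$, $(t'',j'')$ of $\mathcal{P}_\alpha^{(2)}$ and using the vague convergence of the point measures to produce corresponding atoms in the pre-limit; this identification shows the jump of $Y_u^\leftarrow$ is a single jump converging in location and size. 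Alternatively, one can avoid the two-step map altogether and work with the one-step functional $\sum_k\varepsilon_{(\tau_k,y_k)}\mapsto(\inf\{\tau_k:\mu\tau_k+y_k>t\})_{t\in\mr}$ from $M_p$ to $D(\mr)$, in the spirit of Resnick's Corollary~4.21 (which is precisely what the paper does for Theorem~\ref{thm:05tau}); that map carries the atom information that the running-maximum path forgets, and so \emph{is} a.s.\ $J_1$-continuous at $\mathcal{P}_\alpha^{(2)}$. To repair your proof you need to replace the ``inverse of $Y_u$ converges because the inverse map is continuous'' step by one of these two devices; as stated, the step is not justified.

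One small side note: the sign in \eqref{eq:marginaltau03} as printed reads $1-\exp\bigl(+\tfrac{1}{\mu(\alpha-1)(t-\mu y)^{\alpha-1}}\bigr)$, which would be negative; it should be $1-\exp\bigl(-\tfrac{1}{\mu(\alpha-1)(t-\mu y)^{\alpha-1}}\bigr)$, matching your void-probability calculation.
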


\begin{thm}\label{thm:05tau}
Under the assumptions of Theorem \ref{thm:05}, as $v\to\infty$,
\begin{multline*}
\Big(\frac{\hat \tau(m(v)+ta(v))-v}{a(v)}\Big)_{t\in\mr}~\Longrightarrow~
\left(X^\leftarrow_3(t)\right)_{t\in\mr}
:=\big(\inf\{z\in\mr: \sup_{k:\,t_k \leq z}\,j_k> t\}\big)_{t\in\mr}\\=\big(\inf\{t_k:~ j_k> t\}\big)_{t\in\mr}
\end{multline*}
in the $J_1$-topology on $D(\mr)$. The one-dimensional distributions of the limit process are given by
\begin{equation*}
\mmp\{X^\leftarrow_3(t)\leq y\} =1-\exp\Big(- \mu^{-1} %\frac{1}{\mu}
\eee^{\mu y-t}\Big),\quad t,y\in\mr.
\end{equation*}
\end{thm}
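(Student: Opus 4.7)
The plan is to deduce Theorem \ref{thm:05tau} from Theorem \ref{thm:05} by realizing the rescaled first passage time process as the generalized inverse of the rescaled running maximum process, and then applying a continuous-mapping argument for inverses in the $J_1$-topology. Define
$$M_v(z):=\frac{\max_{1\le k\le \lfloor v+za(v)\rfloor}\hat S_k - m(v)}{a(v)},\qquad T_v(t):=\frac{\hat\tau(m(v)+ta(v))-v}{a(v)}.$$
Since $\hat\tau$ is integer-valued, the inequality $T_v(t)\le z$ is equivalent to $\hat\tau(m(v)+ta(v))\le\lfloor v+za(v)\rfloor$, which in turn, via the elementary identity $\{\hat\tau(s)\le n\}=\{\max_{1\le k\le n}\hat S_k>s\}$, is equivalent to $M_v(z)>t$. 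Because $M_v$ is a right-continuous nondecreasing step function, the set $\{z:M_v(z)>t\}$ then equals $[T_v(t),\infty)$, giving the exact identity $T_v(t)=M_v^\leftarrow(t)$ with $f^\leftarrow(t):=\inf\{z:f(z)>t\}$ the right-continuous generalized inverse.

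By Theorem \ref{thm:05}, $M_v\Longrightarrow X_3$ in $D(\mr,J_1)$. The main work is done by the continuity of the generalized-inverse map $f\mapsto f^\leftarrow$ on nondecreasing càdlàg functions in the $J_1$-topology, in the form of Vervaat's theorem (see, e.g., Corollary 13.6.4 in Whitt's \emph{Stochastic-Process Limits}). The limit $X_3$ is a pure-jump extremal process built from the atoms $(t_k,j_k)$ of $\mathcal{P}^{(3)}$, with right-continuous step-function paths whose plateau heights form a countable random set, and $X_3^\leftarrow$ is itself a nondecreasing pure-jump process. Because $\mathcal{P}^{(3)}$ has the absolutely continuous mean measure $\rho({\rm d}x)\,\nu({\rm d}y)=\eee^{\mu x}\,{\rm d}x\,\eee^{-y}\,{\rm d}y$, the height set $\{j_k\}$ almost surely avoids any prescribed finite deterministic collection of levels $t_1,\ldots,t_m\in\mr$, and hence so does the plateau height set of $X_3$, which is precisely the Vervaat-Whitt regularity condition for a.s.\ continuity of the inverse map at $X_3$ at those levels. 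Restricting to compact windows $[-Z,Z]$ in the time variable and $[-T,T]$ in the level variable, with $Z$ chosen so that $X_3(-Z)<-T$ and $X_3(Z)>T$ hold with probability close to one (possible since $X_3(z)\to\pm\infty$ as $z\to\pm\infty$, because both coordinates of the intensity of $\mathcal{P}^{(3)}$ have infinite one-sided tails), the inverse-map theorem yields $T_v=M_v^\leftarrow\Longrightarrow X_3^\leftarrow$ in $D(\mr,J_1)$.

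The main technical obstacle is this application of the inverse-map continuity at the jump-type limit $X_3$ in the two-sided Skorokhod space $D(\mr)$: the classical Vervaat statement is on $[0,\infty)$ and typically assumes a continuous strictly increasing limit, so an extension to nondecreasing step-function limits and the matching of compact windows on both axes is required, though both points are handled by standard arguments once the tail control on $X_3$ above is in place. Finally, the marginal distribution is read off from the equivalence $\{X_3^\leftarrow(t)\le y\}=\{X_3(y)>t\}$, valid for right-continuous nondecreasing step functions such as $X_3$, combined with the Gumbel marginal $\mmp\{X_3(y)\le t\}=\exp(-\mu^{-1}\eee^{\mu y-t})$ from Theorem \ref{thm:05}, which yields $\mmp\{X_3^\leftarrow(t)\le y\}=1-\exp(-\mu^{-1}\eee^{\mu y-t})$ for all $t,y\in\mr$, as claimed.
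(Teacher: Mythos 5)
Your overall approach is the same as the paper's: express the rescaled first--passage process as the generalized inverse of the rescaled running maximum and invoke continuity of the inverse map. You route the argument through Theorem~\ref{thm:05} and then apply an inverse-map result, whereas the paper routes it through Proposition~\ref{prop:vague5} and a single composite functional $Q$ (point measure $\mapsto$ inverse of the associated extremal process), citing the proof of Corollary~4.21 on p.~216 in Resnick~\cite{Resnick:1987} for the a.s.\ $J_1$-continuity of $Q$ at $\mathcal{P}^{(3)}$. Both routes rest on the same exact identity $T_v=M_v^\leftarrow$ and the same continuous-mapping argument, so they are genuinely close.

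Where your write-up has a real gap is in the inverse-map continuity step. You cite Whitt's \emph{Stochastic-Process Limits}, Corollary~13.6.4, but the $J_1$-continuity statements in that section require a \emph{continuous and strictly increasing} limit, which $X_3$ is emphatically not --- it is a pure-jump extremal process with both jumps and plateaux. As the present paper itself emphasizes in the proof of Theorem~\ref{thm:02tau}, Whitt~\cite{Whitt:1971} shows the inverse map is continuous in $M_1$ but \emph{not} in $J_1$ in general, and additional structure of the limit must be exploited. The condition you formulate --- that the plateau heights of $X_3$ a.s.\ miss any prescribed finite set of levels --- is a finite-dimensional condition, not the process-level regularity that $J_1$-continuity of the inverse requires; what is actually needed is the a.s.\ configuration structure of $\mathcal{P}^{(3)}$ (distinct abscissae and ordinates, no accumulation of record heights near a fixed level, unboundedness on both sides), which is exactly what the proof of Resnick's Corollary~4.21 uses. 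Your acknowledgement that ``both points are handled by standard arguments'' papers over the one place where a nontrivial justification is required. Replacing the Whitt~Corollary~13.6.4 citation by the Resnick Corollary~4.21 argument (or, alternatively, by the $M_1\Rightarrow J_1$ lifting argument via Lemma~2.4(b) of Iksanov, Pilipenko and Samoilenko~\cite{Iksanov+Pilipenko+Samoilenko:2017} that the paper uses for Theorems~\ref{thm:02tau} and~\ref{thm:03tau}) would close the gap. The derivation of the marginal from $\{X_3^\leftarrow(t)\le y\}=\{X_3(y)>t\}$ (valid up to null events) and the Gumbel marginal of $X_3$ is correct.
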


\begin{thm}\label{thm:06tau}
Under the assumptions of Theorem \ref{thm:06}, as $v\to\infty$,
\begin{multline} \label{eq:inverseextremal6}
\Big(\frac{\hat \tau(v+ta(\mu^{-1}v))-\mu^{-1}v}{\mu^{-1/2}a(v)}\Big)_{t\in\mr}~\Longrightarrow~
\left(X^\leftarrow_4(t)\right)_{t\in\mr}
\\
\ai{:=}\big(\min \big\{\mu^{-1} (t-(2/A)^{1/2} %\sqrt{2/A}
),\,  \inf\{t_k:~%(
\mu t_k+j_k %)
> t\}\big\} \big)_{t\in\mr}
\end{multline}
in the $J_1$-topology on $D(\mr)$. The one-dimensional distributions of the limit process are given by
\begin{equation}\label{eq:marginaltau06}
\mmp\{X^\leftarrow_4(t)\leq y\} =1-\exp\Big(\frac{1}{2 \mu(t-\mu y)^{2}}\Big),\quad t\in\mr,~ y<\mu^{-1} (t - (2/A)^{1/2} % \sqrt{2/A}
)
\end{equation}
and $\mmp\{X^\leftarrow_4(t)\leq y\}=1$ for $t\in\mr$ and $y\geq \mu^{-1}(t -(2/A)^{1/2} % \sqrt{2/A}
)$.
\end{thm}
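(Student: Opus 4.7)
The plan is to deduce Theorem \ref{thm:06tau} from Theorem \ref{thm:06} via a generalized-inverse argument, in parallel with the derivations of Theorems \ref{thm:02tau}, \ref{thm:03tau}, and \ref{thm:05tau}. Writing $\hat M(n):=\max_{1\leq k\leq n}\hat S_k$, the elementary duality
$$\{\hat\tau(u)\leq n\}=\{\hat M(n)>u\},\quad n\in\mn,\ u\in\mr,$$
identifies $\hat\tau$ as the generalized inverse of the nondecreasing step process $\hat M$. After the change of variables $w=\mu^{-1}v$, and using $a(\mu w)=\mu^{1/2}a(w)(1+o(1))$ (which follows from $a(u)=((A/2)u\log u)^{1/2}$ together with $\log(\mu w)\sim\log w$), the left-hand side of \eqref{eq:inverseextremal6} is asymptotically equivalent to
$$Z_w(t):=\frac{\hat\tau(\mu w+t a(w))-w}{a(w)}.$$
The duality becomes the pointwise identity $\{Z_w(t)\leq s\}=\{Y_w(s)>t\}$, where $Y_w(s):=(\hat M(\lfloor w+sa(w)\rfloor)-\mu w)/a(w)$ is precisely the scaled maximum appearing in Theorem \ref{thm:06}; hence $Z_w=Y_w^{\leftarrow}$ pathwise.

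Next I would combine Theorem \ref{thm:06}, which gives $Y_w\Longrightarrow X_4$ in $J_1$ on $D(\mr)$, with the continuous-mapping theorem applied to the generalized inverse on the cone of nondecreasing c\`adl\`ag functions $f\in D(\mr)$ with $f(\pm\infty)=\pm\infty$. The limit $X_4$ satisfies $X_4(t)\geq\mu t+(2/A)^{1/2}$ and so lies in that cone a.s. Although $X_4$ has random flat pieces on the excursions of the Poisson-driven supremum above the line $\mu t+(2/A)^{1/2}$, these pieces occur at an a.s.-countable set of heights, which suffices for the generalized inverse to be continuous at $X_4$ in the $J_1$-topology (cf.\ the discussion of Skorokhod topologies in \cite{Billingsley:1968, Ethier+Kurtz:2005}). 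Consequently, $Z_w=Y_w^{\leftarrow}\Longrightarrow X_4^{\leftarrow}$ in $J_1$ on $D(\mr)$.

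To identify $X_4^{\leftarrow}$ explicitly, observe that $X_4(s)>t$ holds iff either $\mu s+(2/A)^{1/2}>t$, giving $s>\mu^{-1}(t-(2/A)^{1/2})$, or some atom $(t_k,j_k)$ of $\mathcal{P}_3^{(2)}\vert_{\mr\times((2/A)^{1/2},\infty)}$ with $t_k\leq s$ satisfies $\mu t_k+j_k>t$. Taking the infimum over such $s$ yields the two-term minimum in \eqref{eq:inverseextremal6}. For the one-dimensional marginal, when $y<\mu^{-1}(t-(2/A)^{1/2})$ the deterministic alternative exceeds $y$, so $\{X_4^{\leftarrow}(t)\leq y\}$ is the event that some atom with $t_k\leq y$ has $j_k>t-\mu t_k$. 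Since $t-\mu s>(2/A)^{1/2}$ throughout $(-\infty,y]$, the Poisson void-probability formula gives
$$\mmp\{X_4^{\leftarrow}(t)>y\}=\exp\Big(-\int_{-\infty}^y(t-\mu s)^{-3}\,\dd s\Big)=\exp\Big(-\frac{1}{2\mu(t-\mu y)^2}\Big),$$
matching \eqref{eq:marginaltau06}. For $y\geq\mu^{-1}(t-(2/A)^{1/2})$ the minimum is a.s.\ at most $y$, giving probability $1$.

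The main obstacle is the continuous-mapping step: one must check that the $J_1$-continuity of the generalized inverse holds at the random limit $X_4$, whose flat pieces (unlike in the strictly monotone case) require some care. This reduces to verifying, via the Poisson structure, that the heights of the flat pieces a.s.\ avoid any fixed value and that on every bounded subinterval of $\mr$ the $J_1$-convergence $Y_w\to X_4$ passes to the inverses; once this is in hand the remaining steps are standard. The same verification will already be needed for Theorem \ref{thm:03tau}, and nothing in the current boundary regime makes it harder.
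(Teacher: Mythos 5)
Your plan matches the paper's skeleton in the ingredients that are correct: the duality $\{\hat\tau(u)\leq n\}=\{\hat M(n)>u\}$, the substitution $v\mapsto\mu^{-1}v$ together with $a(\mu w)\sim\mu^{1/2}a(w)$, the identification of the limit as the generalized inverse of $X_4$, and the Poisson void-probability computation for the marginal (including the check that $t-\mu s>(2/A)^{1/2}$ on $(-\infty,y]$ so the full $\nu_3$ applies) are all carried out correctly. The gap is precisely in the continuous-mapping step, which you yourself flag as the main obstacle. You assert that because the flat pieces of $X_4$ occur at an a.s.\ countable set of heights, the generalized-inverse map is $J_1$-continuous at $X_4$. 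That is false. The map $f\mapsto f^\leftarrow$ is \emph{not} continuous in the $J_1$-topology even on nondecreasing c\`adl\`ag functions; the paper explicitly cites the counterexample on p.~419 of Whitt~\cite{Whitt:1971}. Countability of the flat heights only guarantees a.s.\ continuity of the one-dimensional evaluations $t\mapsto X_4^\leftarrow(t)$, and says nothing about functional $J_1$-continuity. The obstruction is that a flat piece of $X_4$ of width $\delta$ at height $h$ corresponds to a jump of size $\delta$ in $X_4^\leftarrow$ at $h$, but $J_1$-convergence $f_n\to X_4$ does not prevent $f_n$ from climbing through that plateau in several small steps, in which case $f_n^\leftarrow$ replaces the single jump by a staircase and converges to $X_4^\leftarrow$ only in $M_1$, not in $J_1$.

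The paper's own argument circumvents this in two stages. First it uses Whitt's result that the inverse \emph{is} continuous in the weaker $M_1$-topology, giving $M_1$-convergence of the centered and scaled first-passage processes. It then upgrades to $J_1$ not by continuity of a fixed map but by a direct argument tailored to the sequence at hand: the vague convergence $\sum_{k}\varepsilon_{((k-v)/a(v),\,(\hat S_k-\mu k)/a(v))}\Rightarrow\mathcal{P}_3^{(2)}$ from Proposition~\ref{prop:vague6} forces the atoms responsible for each jump of the prelimit inverse to converge, in both location and size, to the atoms producing the corresponding jump of $X_4^\leftarrow$; this is exactly the hypothesis of Lemma~2.4(b) in~\cite{Iksanov+Pilipenko+Samoilenko:2017}, which then lifts the $M_1$-convergence to $J_1$ (the paper spells this out in the proof of Theorem~\ref{thm:02tau} and refers to it for Theorems~\ref{thm:03tau} and~\ref{thm:06tau}). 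Your proposal needs to be amended to follow this two-step route; the ``heights of flat pieces avoid fixed values'' observation is relevant only to the marginal distributions and does not substitute for the jump-matching argument.
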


\section{The first passage time into $(t,\infty)$ vs the number of visits to $(-\infty, t]$}

Let $(S_n)_{n\geq 1}$ be a standard random walk, with not necessarily nonnegative jumps. For $t\in\mr$, put $$N(t):=\sum_{n\geq 1}\1_{\{S_n\leq t\}}\quad\text{and}\quad \tau(t):=\inf\{n\geq 1: S_n>t\}.$$ If $\mu=\me [S_1]\in (0,\infty)$, then, for each $t\in\mr$, these random variables are a.s.\ finite. It is known, see, for instance, Proposition A.1 in \cite{Iksanov+Kondratenko:2021}, that if $\mu\in (0,\infty)$ and $\sigma^2={\rm Var}\,[S_1]\in (0,\infty)$, then $$\Big(\frac{M(vt)-\mu^{-1}vt}{(\sigma^2\mu^{-3}t)^{1/2}}\Big)_{t\geq 0}~\Longrightarrow~(B(t))_{t\geq 0},\quad v\to\infty$$ in the $J_1$-topology on $D([0,\infty))$. Here, $M$ is either $N$ or $\tau$, and $B$ is a standard Brownian motion. In words, the distributional asymptotic behaviors of the number of visits to $(-\infty, t]$ and the first passage time into $(t,\infty)$ are identical for standard random walks with a positive mean and a finite variance. This is not totally surprising, since the major contribution to $N(t)$ is made by the positions $S_k$ with $k\leq \tau(t)-1$, whereas the contribution of the subsequent positions is $o(t^{1/2})$. The argument is particularly simple in the case of one-dimensional distributions: just write $$N(t)-(\tau(t)-1)=\sum_{n\geq \tau(t)+1}\1_{\{S_n\leq t\}}\leq \sum_{n\geq 1}\1_{\{S_{n+\tau(t)}-S_{\tau(t)}\leq 0\}}\quad\text{a.s.}$$ and note that the latter variable has the same distribution as $N(0)$.

As far as decoupled standard random walks are considered, the situation is drastically different. This change is caused by the fact that the sequence $(\hat S_n)_{n\geq 1}$ is not monotone nondecreasing. % increasing. 
This leads to the following effect: the number of visits $\hat N$ and the first passage time $\hat \tau$ exhibit absolutely different distributional behaviors. For instance, according to Theorem 2.1 in \cite{Alsmeyer+Iksanov+Kabluchko:2025}, under the assumption that the distribution of $\xi$ belongs to the domain of attraction of a stable distribution with finite mean, the limit processes for $\hat N$ are stationary Gaussian. On the other hand, it follows from the results of the previous section that the limit processes for $\hat \tau$ are inverse extremal-like processes. Specifically, assume that $\me [\xi^3]<\infty$. By formula (9) in \cite{Alsmeyer+Iksanov+Kabluchko:2025}, $$\Big(\frac{\hat N(\sigma^2(t+v)^2)-\mu^{-1}\sigma^2(t+v)^2}{(\mu^{-3/2}\sigma^2 v)^{1/2}}\Big)_{t\geq 0}~\Longrightarrow~ (W(t))_{t\geq 0},\quad v\to\infty$$ in the $J_1$-topology on $D([0,\infty))$ provided that the function $x\mapsto \sum_{n\geq 1}\mmp\{S_n\leq x\}$ is Lipschitz continuous. Here, $W$ is a centered stationary Gaussian process with explicitly known covariance. A very different distributional limit theorem for $\hat \tau$ can be found in Theorem \ref{thm:03tau}. Again, the striking difference outlined above is not surprising. The asymptotic behaviors of $\hat N$ and $\hat \tau$ are driven by the variables $\hat S_k$ with $k=\lfloor \mu^{-1}v+tv^{1/2}\rfloor$ as $t$ varies over $[0,\infty)$ and $k=\lfloor v+t(v/\log v)^{1/2}\rfloor$ as $t$ varies over $\mr$, respectively.

\section{Proofs of the main results}
\subsection{Large deviation probabilities in the case
$\alpha\le 2$}
\begin{lemma}\label{lem:tech}
Assume that  \eqref{eq:regvar} holds with some $\alpha\le 2$.
Then, for fixed $t,y>0$,
\begin{equation}\label{eq:limit}
\lim_{v\to\infty} v\mmp\{S_{\lfloor tv\rfloor}>a(v)y\}=ty^{-\alpha}
\end{equation}
if $\me [\xi]=\infty$ and
\begin{equation}\label{eq:limit1}
\lim_{v\to\infty} v\mmp\{S_{\lfloor tv\rfloor}-\mu \lfloor tv\rfloor>a(v)y\}=ty^{-\alpha}
\end{equation}
if $\mu=\me [\xi]<\infty$.
\end{lemma}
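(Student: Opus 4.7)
The plan is to reduce the two asserted limits \eqref{eq:limit} and \eqref{eq:limit1} to the classical principle of a single big jump for sums of i.i.d.\ random variables with regularly varying right tail. The defining condition of $a(\cdot)$ together with \eqref{eq:regvar} yields, for each fixed $y>0$,
\begin{equation*}
\mmp\{\xi>a(v)y\}~\sim~y^{-\alpha}\mmp\{\xi>a(v)\}~\sim~ y^{-\alpha} v^{-2}\quad \text{as } v\to\infty.
\end{equation*}
Setting $n=\lfloor tv\rfloor$, both \eqref{eq:limit} and \eqref{eq:limit1} then reduce to the single-big-jump asymptotic
\begin{equation}\label{eq:SBJ-plan}
\mmp\{S_n-\me[S_n]>a(v)y\}~\sim~ n\,\mmp\{\xi>a(v)y\},\quad v\to\infty,
\end{equation}
where $\me[S_n]$ is taken to be $0$ in the infinite-mean case. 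Indeed, multiplying \eqref{eq:SBJ-plan} by $v$ gives $v\cdot tv\cdot y^{-\alpha}v^{-2}\to ty^{-\alpha}$, which is precisely the right-hand side of \eqref{eq:limit} and \eqref{eq:limit1}.

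The second step is to establish \eqref{eq:SBJ-plan} via a Nagaev-type heavy-tailed large-deviation theorem, which guarantees the asymptotic uniformly in arguments $x=a(v)y$ above a suitable threshold $\gamma_n$: one may take any $\gamma_n=o(n^{1/\alpha})$ in the infinite-mean case, $\gamma_n$ proportional to $n^{1/\alpha}$ for $\alpha\in (1,2)$ after centering by the mean, and $\gamma_n$ of order $(n\log n)^{1/2}$ for $\alpha=2$. Since $a(\cdot)$ is regularly varying of index $2/\alpha\ge 1$, while $\gamma_{\lfloor tv\rfloor}$ is of order at most $\sqrt{v\log v}$ up to a slowly varying factor, the condition $a(v)y/\gamma_{\lfloor tv\rfloor}\to\infty$ is verified in every regime by a direct computation using Potter's bounds on $\ell$. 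For the precise reference I would appeal to the unified treatment of Denisov, Dieker and Shneer, or the earlier classical works of A.V.~Nagaev, S.V.~Nagaev and Heyde.

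The main obstacle is the boundary subcase $\alpha=2$ with $\liminf_{v\to\infty}\ell(v)<\infty$. There, $\xi$ may have finite variance, so Gaussian CLT contributions of order $n^{1/2}$ compete with the heavy-tail term and one must check that $a(v)y$ sits safely above the Gaussian window. The relation $v^2 a(v)^{-2}\ell(a(v))\to 1$ yields $a(v)^2\sim v^2\ell(a(v))$, and Potter's bounds on $\ell$ then force $a(v)^2\gg v\log v$ for large $v$, placing $a(v)y$ well inside the heavy-tail regime of validity of \eqref{eq:SBJ-plan}. Once \eqref{eq:SBJ-plan} is in hand the conclusion is immediate from the computation in the first paragraph.
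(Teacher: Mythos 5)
Your proposal is correct and the overall strategy — reduce to the single-big-jump asymptotic $\mmp\{S_n-\me[S_n]>x\}\sim n\mmp\{\xi>x\}$ with $n=\lfloor tv\rfloor$, $x=a(v)y$, and then check that $x$ sits above the relevant threshold in each regime — is the same as the paper's. The difference is in how the single-big-jump result is obtained. For $\alpha<2$ the paper, like you, simply cites the literature (Theorem~2.1 of Berger, 2019), so there the two treatments coincide. For $\alpha=2$, however, the paper gives a short self-contained derivation rather than a citation: the lower bound comes from isolating one large summand, controlling the remaining $n-1$ centered summands via the Bernstein inequality (which applies because the increments $\xi_k-\mu$ are bounded from below by $-\mu$), and the upper bound follows from the Fuk--Nagaev inequality (Theorem~1.2 of Nagaev, 1979) together with the slow variation of the truncated second moment $\sigma^2(\cdot)$. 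This handles both the finite- and infinite-variance subcases of $\alpha=2$ in a unified, explicit way and avoids having to chase down exactly which form of Nagaev-type theorem covers the infinite-variance boundary. Your references (Denisov--Dieker--Shneer; A.V.~Nagaev; S.V.~Nagaev; Heyde) do contain the needed uniform big-jump asymptotics including $\alpha=2$, and your verification that $a(v)^2\gg v\log v$ via $a(v)^2\sim v^2\ell(a(v))$ and $\ell(x)=x^{o(1)}$ is correct, so the argument closes; it just trades the paper's explicit two-sided bound for an appeal to the general theory.
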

\begin{proof}
It is a standard fact (see, for instance, Lemma 6.1.3 in \cite{Iksanov:2016}) that $a$ is regularly varying at $\infty$ of index $2/\alpha$. In particular, $a %(v)
$ grows faster than the normalizing sequence % norming sequnce 
in the limit theorem for $S_n$. Then the desired relations in \eqref{eq:limit} and \eqref{eq:limit1} in the case % for 
$\alpha<2$ follow from Theorem~2.1 in % Berger
~\cite{Berger2019}.

Now we treat %Let us turn now to 
the case $\alpha=2$.
We start by deriving lower bounds for $\mmp\{S_n^0>x\}$, where $S_n^0:=S_n-\mu n$ for $n\in\mn$. Put %Set 
also $\xi_k^0:=\xi_k-\mu$ for $k\in\mn$. For each $\varepsilon>0$ and $x>0$, % one has
\begin{align*}
 \mmp\{S_n^0>x\}
 &\ge n\mmp\{\xi_1^0>(1+\varepsilon)x\}
 \mmp\big\{S_{n-1}^0\ge -\varepsilon x,\max_{j\le n-1}\xi_j^0\le (1+\varepsilon)x\big\}\\
 &\ge\mmp\{\xi_1^0>(1+\varepsilon)x\}\big(1-\mmp\{S_{n-1}^0<-\varepsilon x\}-\mmp\big\{\max_{j\le n-1}\xi_j^0> (1+\varepsilon)x\big\}\big)\\
 &\ge\mmp\{\xi_1^0>(1+\varepsilon)x\}\left(1-n\mmp\{\xi_1^0>(1+\varepsilon)x\}-\mmp\{S_{n-1}^0<-\varepsilon x\}\right).
\end{align*}
Since $\xi_k^0\ge -\mu$, an application of the Bernstein inequality yields % we can apply the Bernstein inequality and conclude that
$$
\mmp\{S_{n-1}^0<-\varepsilon x\}\to0
$$
provided that $xn^{-1/2} %\frac{x}{\sqrt{n}}
\to\infty$. This leads to the relation
$$
\liminf_{n\to\infty}\frac{\mmp\{S_n^0>x\}}{n\mmp\{\xi_1^0>x\}}
\ge (1+\varepsilon)^{-\alpha}
$$
for $x$ satisfying $xn^{-1/2} %\frac{x}{\sqrt{n}}
\to\infty$ and $n\mmp\{\xi_1>x\}\to0.$ Letting here $\varepsilon\to0+$, we obtain
\begin{align}
\label{eq:lower-b}
 \liminf_{n\to\infty}\frac{\mmp\{S_n^0>x\}}{n\mmp\{\xi_1^0>x\}}
\ge 1
\end{align}
for $x$ satisfying $xn^{-1/2} %\frac{x}{\sqrt{n}}
\to\infty$ and $n\mmp\{\xi_1>x\}\to0.$

To obtain the corresponding upper bound we apply one of the Fuk-Nagaev inequalities. By Theorem~1.2 in % from 
\cite{Nagaev:1979}, with $t=2$ and $y=x/r %\frac{x}{r}
$ for % with 
some $r>1$, % we have
$$
\mmp\{S_n^0>x\}
\le n \mmp\{\xi_1^0>x/r\}
+\eee^r\left(\frac{rn\sigma^2(x/r)}{x^2}\right)^r,
$$
where
$$
\sigma^2(y):=\me\left[(\xi_1^0)^2\1_{\{|\xi_1^0|\leq y\}} %;|\xi_1^0|\le y
\right],\quad y\geq 0.
$$
The assumption \eqref{eq:regvar} ensures % implies 
that the function
$\sigma^2 %(y)
$ is slowly % regularly 
varying at infinity. Consequently, there exists a constant $C_r$ such that
$$
\mmp\{S_n^0>x\}
\le n \mmp\{\xi_1^0>x/r\}
+C_r\left(\frac{n\sigma^2(x)}{x^2}\right)^r.
$$
This bound implies that if $x\ge n^{1/2+\delta}$ with some
$\delta>0$, then
$$
\limsup_{n\to\infty}
\frac{\mmp\{S_n^0>x\}}{n \mmp\{\xi_1^0>x\}}\le r^\alpha.
$$
Letting now $r\to1+$, we conclude that
\begin{align}
\label{eq:upper-b}
\limsup_{n\to\infty}
\frac{\mmp\{S_n^0>x\}}{n \mmp\{\xi_1^0>x\}}\le 1
\end{align}
provided that $x\ge n^{1/2+\delta}$.
Combining \eqref{eq:lower-b} and \eqref{eq:upper-b} and recalling that $a%(v)
$ is regularly varying of index $2/\alpha$, we infer that \eqref{eq:limit1} is valid for $\alpha=2$. Thus, the proof of Lemma \ref{lem:tech} is complete.
\end{proof}
Now we argue that the centering $\mu\lfloor tv\rfloor$ in \eqref{eq:limit1} may be omitted in some cases. Specifically, if $\alpha\in (1,2)$ or $\alpha=1$ and $\mu<\infty$, then regular variation of $a$ of index $2/\alpha$ entails $\lim_{v\to\infty} v^{-1}a(v)=\infty$. The latter limit relation also holds true if $\alpha=2$ and $\lim_{v\to\infty}\ell(v)=\infty$ as a consequence of $(\ell(a(v)))^{1/2}\sim v^{-1}a(v)$ as $v\to\infty$. Summarizing, if $\alpha\in (0,2)$ or $\alpha=2$ and $\lim_{v\to\infty}\ell(v)=\infty$, then \eqref{eq:limit} holds, whereas if $\alpha=2$ and $\liminf_{v\to\infty}\ell(v)<\infty$, then \eqref{eq:limit1} holds.

\subsection{Proof of Theorem \ref{thm:02}}

For intervals $I$ and $J$, denote by $M_p(I\times J)$ the set of Radon point measures on $I\times J$ endowed with the vague topology. Also, let $\varepsilon_{(t,x)}$ denote the Dirac measure with atom at $(t,x)$. As a preparation, we need a result on vague convergence. Recall that the definition of the Poisson random measure $\mathcal{P}_\alpha^{(1)}$ was given in Theorem~\ref{thm:02}.
\begin{assertion}\label{prop:vague}
Under the assumptions of Theorem \ref{thm:02}, if $\alpha\in (0,2)$ or $\alpha=2$ and $\lim_{v\to\infty}\ell(v)=\infty$, then $$\sum_{k\geq 1}\varepsilon_{(k/v, \hat S_k/a(v))}~\Longrightarrow~ \mathcal{P}_\alpha^{(1)},\quad v\to\infty$$ on $M_p([0,\infty)\times (0,\infty])$, whereas if $\alpha=2$ and $\liminf_{v\to\infty}\ell(v)<\infty$, then $$\sum_{k\geq 1}\varepsilon_{(k/v, (\hat S_k-\mu k)/a(v))}\1_{\{\hat S_k>\mu k\}}~\Longrightarrow~ \mathcal{P}_\alpha^{(1)},\quad v\to\infty$$ on $M_p([0,\infty)\times (0,\infty])$, where $\mu=\me [\xi]<\infty$.  \ref{thm:02}.
\end{assertion}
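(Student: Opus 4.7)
The plan is to invoke the classical Kallenberg-type criterion for weak convergence of superpositions of independent random points to a Poisson random measure. Because the atoms $(k/v,\hat S_k/a(v))$, $k\ge 1$, are independent for each fixed $v$, the claimed vague convergence on $M_p([0,\infty)\times(0,\infty])$ reduces to verifying, for every set $B$ in a DC-semiring of relatively compact subsets of $[0,\infty)\times(0,\infty]$, both
\begin{equation*}
\sum_{k\geq 1}\mmp\{(k/v,\,\hat S_k/a(v))\in B\}\longrightarrow (\theta\times\nu_\alpha)(B)
\quad\text{and}\quad
\max_{k\geq 1}\mmp\{(k/v,\,\hat S_k/a(v))\in B\}\longrightarrow 0.
\end{equation*}
As semiring we take the half-open rectangles $B=(s,t]\times(y,\infty]$ with $0\leq s<t<\infty$ and $y>0$, for which $(\theta\times\nu_\alpha)(B)=\tfrac12(t^2-s^2)y^{-\alpha}$.

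In the first case ($\alpha\in(0,2)$, or $\alpha=2$ with $\ell(v)\to\infty$), Lemma~\ref{lem:tech} together with the remark following it yields the pointwise asymptotic
\begin{equation*}
v\,\mmp\{\hat S_{\lfloor uv\rfloor}>a(v)y\}\longrightarrow uy^{-\alpha},\qquad u>0.
\end{equation*}
Since $\xi\ge 0$, the function $k\mapsto\mmp\{\hat S_k>a(v)y\}$ is monotone nondecreasing in $k$. Partitioning $(s,t]$ into subintervals $(u_{j-1},u_j]$ and applying this monotonicity block by block sandwiches the sum $\sum_{k/v\in(s,t]}\mmp\{\hat S_k>a(v)y\}$ between the two Riemann-type sums $\sum_j (u_j-u_{j-1})\,v\,\mmp\{\hat S_{\lfloor u_{j-1}v\rfloor}>a(v)y\}$ and $\sum_j (u_j-u_{j-1})\,v\,\mmp\{\hat S_{\lfloor u_j v\rfloor}>a(v)y\}$; letting $v\to\infty$ and then refining the partition delivers the target $\int_s^t u y^{-\alpha}\,du=\tfrac12(t^2-s^2)y^{-\alpha}$. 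The same monotonicity settles the infinitesimal condition: $\max_{k/v\in(s,t]}\mmp\{\hat S_k>a(v)y\}=\mmp\{\hat S_{\lfloor tv\rfloor}>a(v)y\}=O(1/v)\to 0$.

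In the boundary case $\alpha=2$ with $\liminf\ell(v)<\infty$, the same programme is carried out for the centred quantities $\hat S_k-\mu k$ on the event $\{\hat S_k>\mu k\}$, the pointwise input being \eqref{eq:limit1}. The main obstacle here is that $k\mapsto\mmp\{\hat S_k-\mu k>a(v)y\}$ is no longer monotone, so we need an effective uniform dominator. This is supplied by the Fuk-Nagaev inequality already used in the proof of Lemma~\ref{lem:tech}: applied with $x=a(v)y$, $n=k\leq tv$, and some fixed $r>1$, and combined with the identity $v^2(a(v))^{-2}\ell(a(v))\to 1$ and the slow variation of $\sigma^2$, it yields the uniform estimate
\begin{equation*}
v\,\mmp\{\hat S_k-\mu k>a(v)y\}\le C_{y,r}\bigl(k/v+1\bigr),\qquad 1\le k\le tv,
\end{equation*}
for all sufficiently large $v$. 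Dominated convergence for the Riemann-like sums together with the centred pointwise limit then produces the value $\tfrac12(t^2-s^2)y^{-\alpha}$, while the same estimate gives the infinitesimal condition. The delicate step is precisely this uniform Fuk-Nagaev control in the boundary regime; in the three other cases the single-line monotonicity argument already suffices.
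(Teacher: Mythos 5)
Your proposal is correct, and it follows a genuinely different route from the paper's. The paper invokes the Laplace functional criterion of Resnick (Proposition 3.19 in \cite{Resnick:1987}): it computes $\log\me[\exp(-\sum_k f(k/v,\hat S_k/a(v)))]$ for nonnegative continuous $f$ with compact support, rewrites the sum as a Riemann integral, and passes to the limit via \eqref{eq:limit} (or \eqref{eq:limit1}) plus dominated convergence and a supremum bound \eqref{eq:neglig}. You instead use the Grigelionis/Kallenberg criterion for superpositions of independent point processes: since each $\varepsilon_{(k/v,\hat S_k/a(v))}$ carries at most one point, the condition on two-or-more points is void, and convergence to $\mathcal{P}_\alpha^{(1)}$ reduces to (i) $\sum_k\mmp\{(k/v,\hat S_k/a(v))\in B\}\to(\theta\times\nu_\alpha)(B)$ on a determining class with $\mu$-null boundary and (ii) uniform asymptotic negligibility. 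Both criteria are standard and equivalent here; yours avoids Laplace transforms and makes the underlying probability estimates more transparent. Interestingly, the monotonicity of $k\mapsto\mmp\{S_k>x\}$ is actually used by the paper as well (it is what makes the integrand bound by $v\mmp\{S_{\lfloor\lambda v\rfloor}>a(v)\gamma\}$ work in their dominated convergence step and in \eqref{eq:neglig}); the paper, however, treats the boundary case ($\alpha=2$, $\liminf\ell<\infty$) by a one-line remark, whereas you correctly flag that monotonicity fails for the centered sums $S_k-\mu k$ and supply the needed uniform Fuk--Nagaev dominator $v\,\mmp\{S_k-\mu k>a(v)y\}\le C_{y,r}(k/v+1)$. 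This is exactly the kind of control the paper implicitly relies on but does not spell out, so your version is arguably more complete on this point. Two minor technical remarks: the class of half-open rectangles $(s,t]\times(y,\infty]$ is not itself a semiring, so you should either enlarge it to boxes $(s,t]\times(y_1,y_2]$ (which do form a DC-semiring with $\mu$-null boundaries) or note that the additivity of condition (i) allows one to reduce to your class; and in the Riemann-sum sandwich the lower endpoint should read $\lfloor u_{j-1}v\rfloor+1$ rather than $\lfloor u_{j-1}v\rfloor$, an innocuous off-by-one that disappears in the limit.
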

\begin{proof}
 %To prove \eqref{eq:inter2}, observe that the function $v\mapsto v(a(v))^{-2}$ is regularly varying at $\infty$ of index $-1$. Integrating by parts and then applying Proposition 1.5.9a in \cite{Bingham+Goldie+Teugels:1989} we conclude that $v\mapsto \int_{[0,\,v]}x^2 {\rm d}\mmp\{|\xi-\mu|\leq x\}$ is slowly varying at $\infty$. This secures slow variation of $v\mapsto (\log a(v)) \int_{[0,\,a(v)]}x^2 {\rm d}\mmp\{|\xi-\mu|\leq x\}$, and \eqref{eq:inter2} follows.

\noindent {\sc The case where \eqref{eq:limit} prevails.} According to Proposition 3.19 on p.~153 in \cite{Resnick:1987}, it suffices to prove that
\begin{equation}\label{eq:princ}
\lim_{v\to\infty} \me\Big[\exp\Big(-\sum_{k\geq 1}f\big(k/v, \hat S_k/a(v)\big)\Big)\Big]=\exp\Big(-\int_{[0,\infty)\times (0,\infty)} (1-\eee^{-f(t,y)})\theta({\rm d}t)\nu_\alpha({\rm d}y)\Big)
\end{equation}
for any nonnegative continuous function $f$ on $[0,\infty)\times (0,\infty]$ with compact support, with the convention that the sets $[0, a]\times [b,\infty]$ are compact on $[0,\infty)\times (0,\infty]$ for $a,b>0$.

Fix any $f$ as above. Then there exist $\lambda, \gamma>0$ such that $f(t,y)=0$ whenever either $t>\lambda$ or $y\in [0,\gamma)$. By the independence of $\hat S_1$, $\hat S_2,\ldots$, % $\{\hat S_k\}$},
\begin{multline*}
\log \me\Big[\exp\Big(-\sum_{k\geq 1}f(k/v, \hat S_k/a(v))\Big)\Big]=-\sum_{k\geq 1}\log \me\big[\exp(-f(k/v, S_k/a(v)))\big]\\=-\sum_{k\geq 1}\log \Big(1-\int_{(\gamma,\infty)}(1-\eee^{-f(k/v,y)})\mmp\{S_k/a(v)\in {\rm d}y\}\Big).
\end{multline*}
Put $S_0:=0$ and define $f(0,0)$ to be $0$. Using this when passing to the integral we obtain
\begin{multline*}
\sum_{k\geq 1}\int_{(\gamma,\infty)}(1-\eee^{-f(k/v,y)})\mmp\{S_k/a(v)\in {\rm d}y\}\\=\int_0^\lambda \int_{(\gamma,\infty)}(\eee^{-f(t,y)}-\eee^{-f(\lfloor tv\rfloor/v,y)})v\mmp\{S_{\lfloor tv\rfloor}/a(v)\in {\rm d}y\}{\rm d}t\\+\int_0^\lambda \int_{(\gamma,\infty)}(1-\eee^{-f(t,y)})v\mmp\{S_{\lfloor tv\rfloor}/a(v)\in {\rm d}y\}{\rm d}t=:I(v)+J(v).
\end{multline*}
For $\delta>0$, put $\omega_{\eee^{-f}}(\delta):=\sup_{||{\bf x}-{\bf y}||\leq \delta}|\eee^{-f({\bf x})}-\eee^{-f({\bf y})}|$, where ${\bf x}, {\bf y}\in [0,\infty)\times (0,\infty)$. Then $$|I(v)|\leq \lambda \omega_{\eee^{-f}}(\sup_{t\in [0,\lambda]}\,(t-\lfloor tv\rfloor/v))v\mmp\{S_{\lfloor \lambda v\rfloor}>a(v)\gamma\}~\to~0,\quad v\to\infty.$$ This follows from \eqref{eq:limit}, $\lim_{v\to \infty}\sup_{t\in [0,\lambda]}\,(t-\lfloor vt\rfloor/v)=0$ and uniform continuity of $\eee^{-f}$. Relation \eqref{eq:limit} ensures that, with $t>0$ fixed,
$$\lim_{v\to\infty} \int_{(\gamma,\infty)}\big(1-\eee^{-f(t,y)}\big)v\mmp\{S_{\lfloor tv\rfloor}/a(v)\in {\rm d}y\}=t\int_{(0,\infty)}\big(1-\eee^{-f(t,y)}\big)\nu_\alpha({\rm d}y).
$$
For each $t\in [0,\lambda]$, there exists a constant $C>0$ such that, for all $v>0$, $$\int_{(\gamma,\infty)}\big(1-\eee^{-f(t,y)}\big)v\mmp\{S_{\lfloor tv\rfloor}/a(v)\in {\rm d}y\}\leq v\mmp\{S_{\lfloor \lambda v\rfloor}>a(v)\gamma\}\leq {\rm C},
$$
where the last inequality is justified by \eqref{eq:limit}. With this at hand, an application of the Lebesgue dominated convergence theorem yields
\begin{multline}\label{eq:inter}
\lim_{v\to\infty}\sum_{k\geq 1}\int_{(\gamma,\infty)}(1-\eee^{-f(k/v,y)})\mmp\{S_k/a(v)\in {\rm d}y\}=\lim_{v\to\infty}J(v)\\=
\int_{[0,\infty)\times(0,\infty)}(1-\eee^{-f(t,y)})\theta({\rm d}t)\nu_\alpha({\rm d}y).
\end{multline}

We claim that
\begin{equation}\label{eq:neglig}
\lim_{v\to\infty}\sup_{k\geq 1}\int_{(\gamma,\infty)}\big(1-\eee^{-f(k/v,y)}\big)\mmp\{S_k/a(v)\in {\rm d}y\}=0.
\end{equation}
Then the inequality
$$x\leq -\log (1-x)\leq x+x^2,\quad x\in [0,1/2]$$ applies with $x=\int_{(\gamma,\infty)}\big(1-\eee^{-f(k/v,y)}\big)\mmp\{S_k/a(v)\in {\rm d}y\}$ for large $v$. As a consequence of \eqref{eq:inter} and \eqref{eq:neglig} we infer
$$\lim_{v\to\infty}\sum_{k\ge 1}\Big(\int_{(\gamma,\infty)}\big(1-\eee^{-f(k/v,y)}\big)\mmp\{S_k/a(v)\in {\rm d}y\}\Big)^2=0,$$ thereby completing the proof of \eqref{eq:princ}.

\noindent {\sc Proof of \eqref{eq:neglig}.} The integral is equal to $0$ for $k\geq \lfloor\lambda v\rfloor+1$. For positive integer $k\leq \lfloor\lambda v\rfloor$, $$\int_{(\gamma,\infty)}\big(1-\eee^{-f(k/v,y)}\big)\mmp\{S_k/a(v)\in {\rm d}y\}\leq \mmp\{S_{\lfloor\lambda v\rfloor}>a(v)\gamma\}~\to~0,\quad v\to\infty$$ in view of \eqref{eq:limit}.

\noindent {\sc The case where \eqref{eq:limit1} prevails.} Invoking once again Proposition 3.19 on p.~153 in \cite{Resnick:1987} we conclude that it is sufficient to prove that
\begin{multline*}
\lim_{v\to\infty} \me\Big[\exp\Big(-\sum_{k\geq 1}f\big(k/v, (\hat S_k-\mu k)/a(v)\big)\1_{\{\hat S_k>\mu k\}}\Big)\Big]\\=\exp\Big(-\int_{[0,\infty)\times (0,\infty)} (1-\eee^{-f(t,y)})\theta({\rm d}t)\nu_\alpha({\rm d}y)\Big)
\end{multline*}
for any nonnegative continuous function $f$ on $[0,\infty)\times \mr$ with compact support in $[0,\infty)\times (0,\infty]$ (in particular, $f(x,y)=0$ for $y\leq 0$). The proof of this mimics that of \eqref{eq:princ}, the only difference being that now we appeal to \eqref{eq:limit1} rather than \eqref{eq:limit}.

The proof of Proposition \ref{prop:vague} is complete.
\end{proof}

We are ready to prove Theorem \ref{thm:02}.
\begin{proof}[Proof of Theorem \ref{thm:02}]
Recall that $\mathcal{P}_\alpha^{(1)}$ denotes a Poisson random measure with atoms $(t_k, j_k)$. Plainly, $\mathcal{P}_\alpha^{(1)}([0,\infty)\times (-\infty, 0])=\mathcal{P}_\alpha^{(1)}(\{0\}\times (0,\infty])=0$ a.s.,  $\mathcal{P}_\alpha^{(1)}$ does not have clustered jumps a.s.\ and that, for all $t,s\geq 0$, $t<s$ and all $\delta>0$, $\mathcal{P}_\alpha^{(1)}([0,t]\times [\delta,\infty])<\infty$ a.s.\ and  $\mathcal{P}_\alpha^{(1)}((t,s)\times (0,\infty))\geq 1$ a.s. Thus, $\mathcal{P}_\alpha^{(1)}$ satisfies with probability one all the assumptions imposed on $\rho_0$ in Proposition \ref{deterministic1}.
The weak convergence stated in \eqref{eq:basic} is then an immediate consequence of Proposition \ref{prop:vague} in combination with the Skorokhod representation theorem (which ensures that there are versions of the point measures from Proposition \ref{prop:vague} converging a.s.) and Proposition \ref{deterministic1} with $f(x)=0$ for $x\geq 0$.

To complete the proof, we point out the marginal distributions of the limit process $X_1$: for $y\geq 0$ and $t>0$,
\begin{multline*}
\mmp\big\{\sup_{k:~t_k \leq t}\, j_k\leq y\big\}=\mmp\big\{\mathcal{P}_\alpha^{(1)}\big((s,x): s\leq t, x>y\big)=0\big\}\\=\exp\big(-\me \big[\mathcal{P}_\alpha^{(1)} \big((s,x): s\leq t, x>y\big)\big]\big)=\exp(-t^2y^{-\alpha}/2).
\end{multline*}
\end{proof}

\subsection{Proof of Theorem \ref{thm:03}}

We start with a proposition.
\begin{assertion}\label{prop:vague3}
Under the assumptions of Theorem \ref{thm:03}, $$\sum_{k\geq 1}\varepsilon_{((k-v)/a(v), (\hat S_k-\mu k)/a(v))}\1_{\{\hat S_k>\mu k\}}~\Longrightarrow~ \mathcal{P}_\alpha^{(2)},\quad v\to\infty$$ on $M_p(\mr \times (0,\infty])$,
where $\mu=\me [\xi]<\infty$ and $\mathcal{P}_\alpha^{(2)}$ is as defined in Theorem~\ref{thm:03}.
\end{assertion}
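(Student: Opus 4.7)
The plan is to follow the proof of Proposition \ref{prop:vague} essentially verbatim, the only real change being the replacement of the large-deviation input (Lemma \ref{lem:tech}) by one adapted to the narrower window $k=v+O(a(v))$. By Proposition 3.19 in \cite{Resnick:1987}, it suffices to prove, for every nonnegative continuous $f$ on $\mr\times(0,\infty]$ with compact support (say $f(s,y)=0$ outside $[-A,A]\times[\gamma,\infty]$),
\begin{equation*}
\lim_{v\to\infty}\me\Big[\exp\Big(-\sum_{k\geq 1}f\big((k-v)/a(v),(\hat S_k-\mu k)/a(v)\big)\1_{\{\hat S_k>\mu k\}}\Big)\Big]=\exp\Big(-\int_{\mr\times(0,\infty)}(1-\eee^{-f(s,y)})\,{\rm d}s\,\nu_\alpha({\rm d}y)\Big).
\end{equation*}
Independence of the $\hat S_k$'s factors the left-hand side, and $-\log(1-x)=x+O(x^2)$ reduces the task to analysing
$$\sum_{k\geq 1}\int_{(\gamma,\infty)}(1-\eee^{-f((k-v)/a(v),y)})\,\mmp\{(S_k-\mu k)/a(v)\in {\rm d}y\}$$
together with a uniform negligibility condition.

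The essential new analytic input is the analogue of Lemma \ref{lem:tech} in the present regime: for each $t\in\mr$ and $y>0$,
\begin{equation*}
\lim_{v\to\infty}a(v)\,\mmp\{S_{\lfloor v+ta(v)\rfloor}-\mu\lfloor v+ta(v)\rfloor>a(v)y\}=y^{-\alpha}.
\end{equation*}
This follows from the precise large-deviation asymptotic $\mmp\{S_n-\mu n>x\}\sim n\mmp\{\xi>x\}$ (Theorem 2.1 in \cite{Berger2019}, or a direct Fuk-Nagaev estimate in the spirit of the proof of Lemma \ref{lem:tech}), provided $x=a(v)y$ lies in the allowed deviation range for $n\sim v$. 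Since $a$ is regularly varying of index $1/(\alpha-1)$, for $\alpha\in(2,3)$ one has $a(v)/v^{1/2}\to\infty$ polynomially, so the estimate is automatic. In the boundary case $\alpha=3$ the relation $a(v)^2\sim v\ell(a(v))$ together with the hypothesis $\ell(v)/\log v\to\infty$ yields $a(v)\gg(v\log v)^{1/2}$, which is exactly the threshold needed for the precise LD asymptotic to apply. Plugging in the defining relation $va(v)\mmp\{\xi>a(v)\}\to 1$ then gives the factor $y^{-\alpha}$.

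With this asymptotic in hand, the Riemann-sum analysis proceeds exactly as in Proposition \ref{prop:vague}. Parametrizing by $s=(k-v)/a(v)$ turns $\sum_k$ into $a(v)\int{\rm d}s$, and the factor $a(v)$ then combines with $\mmp\{(S_k-\mu k)/a(v)\in{\rm d}y\}$ to converge to $\nu_\alpha({\rm d}y)$ on $(\gamma,\infty)$. The discretization error between $s$ and $(\lfloor v+sa(v)\rfloor-v)/a(v)$ is absorbed via uniform continuity of $\eee^{-f}$ (this is the exact analogue of the term $I(v)$ in the proof of Proposition \ref{prop:vague}), and the LD upper bound supplies a uniform dominating constant on $[-A,A]\times[\gamma,\infty)$ so that the Lebesgue dominated convergence theorem yields the limit $\int_{-A}^{A}{\rm d}s\int_{(\gamma,\infty)}(1-\eee^{-f(s,y)})\nu_\alpha({\rm d}y)$. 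Finally, the uniform negligibility $\sup_k\mmp\{S_k-\mu k>a(v)\gamma\}=O(1/a(v))\to 0$ justifies the replacement of $-\log(1-\cdot)$ by its linear part.

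The only genuine obstacle is securing the precise LD asymptotic in the boundary regime $\alpha=3$: the hypothesis $\ell(v)/\log v\to\infty$ is imposed precisely so that $a(v)$ clears the $(v\log v)^{1/2}$ threshold beyond which the Fuk-Nagaev estimate from the proof of Lemma \ref{lem:tech} remains applicable. In the bulk range $\alpha\in(2,3)$ everything reduces to routine regular-variation bookkeeping, and the remainder of the argument is a direct transcription of that of Proposition \ref{prop:vague}.
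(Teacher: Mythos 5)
Your proposal follows the same architecture as the paper's proof: establish the pointwise large-deviation asymptotic $a(v)\,\mmp\{S_{\lfloor v+ta(v)\rfloor}-\mu\lfloor v+ta(v)\rfloor>a(v)y\}\to y^{-\alpha}$, then transcribe the Laplace-functional/Riemann-sum argument of Proposition~\ref{prop:vague} to the window $k=v+O(a(v))$. The derivation of $a(v)\gg(v\log v)^{1/2}$ from $a(v)^2\sim v\,\ell(a(v))$ and $\ell(v)/\log v\to\infty$ matches the paper's computation (eq.~\eqref{eq:inter23232} and \eqref{eq:large_a}).

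One citation is wrong, though. You invoke Theorem~2.1 of \cite{Berger2019} as the primary source for $\mmp\{S_n-\mu n>x\}\sim n\,\mmp\{\xi>x\}$, but that reference concerns the Cauchy domain of attraction ($\alpha$ near $1$) and is what the paper uses in Lemma~\ref{lem:tech} for $\alpha<2$. In the present range $\alpha\in(2,3]$ the correct source is Theorem~1.9 in \cite{Nagaev:1979}, which gives the asymptotic uniformly for $x>(\alpha-2+\delta)^{1/2}(n\log n)^{1/2}$; this is exactly what the threshold $a(v)\gg(v\log v)^{1/2}$ is designed to meet. Your parenthetical fallback to a Fuk--Nagaev estimate is fine, but the Berger reference does not apply here.

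A secondary, minor difference: for the uniform negligibility $\sup_k\int(1-\eee^{-f})\,\mmp\{(S_k-\mu k)/a(v)\in\dd y\}\to0$ you assert $\sup_k\mmp\{S_k-\mu k>\gamma a(v)\}=O(1/a(v))$, which implicitly requires the Nagaev asymptotic to hold uniformly over the whole relevant range of $k$. The paper sidesteps this by bounding the supremum over $k$ by a single running-maximum probability and appealing to the functional CLT together with $a(v)/v^{1/2}\to\infty$, which only needs $\operatorname{Var}[\xi]<\infty$; this is cleaner and more robust. Your version is salvageable (Nagaev's theorem does give the needed uniformity), but you should make the uniformity explicit rather than taking a pointwise limit and then writing a supremum.

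Apart from these two points the proposal is correct and is essentially the paper's proof.
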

\begin{proof}
We start by observing that $a(v)=v^{1/(\alpha-1)}L(v)$ with $L$ slowly varying at $\infty$. Now we show that
\begin{equation}\label{eq:inter23232}
\lim_{v\to\infty}\frac{(L(v))^2}{\log v}=\infty
\end{equation}
provided that $\alpha=3$ and $\lim_{v\to\infty}(\ell(v)/\log v)=\infty$. Indeed, by the definition of $a$, as $v\to\infty$, $$1~\leftarrow~va(v)\mmp\{\xi>a(v)\}~\sim~v^{3/2}L(v)\frac{\ell(v^{1/2}L(v))}{v^{3/2}(L(v))^3}~\sim~\frac{\ell(v^{1/2}L(v))}{(L(v))^2}.$$ Thus, given $\varepsilon>0$ the inequality $\ell(v^{1/2}L(v)) \leq (1+\varepsilon)(L(v))^2$ holds for large $v$. Also, the assumption concerning $\ell$ ensures that given $B>0$ the inequality $\ell(v)\geq B\log v$ holds for large $v$. Hence, $(L(v))^2\geq (1+\varepsilon)^{-1}B\log (v^{1/2}L(v))\geq (1+\varepsilon)^{-1}(B/2)\log v$ for large $v$, which proves \eqref{eq:inter23232}. Therefore, under the conditions of Theorem~~\ref{thm:03}, % one has
\begin{equation}\label{eq:large_a}
\lim_{v\to\infty}\frac{a(v)}{(v\log v)^{1/2} %\sqrt{v\log v}
}=\infty.
\end{equation}
Applying Theorem 1.9 in \cite{Nagaev:1979} to the walk $(S_n-\mu n)_{n\geq 1}$, we infer % $S_n-\nu n$, we have
$$\mmp\{S_n-\mu n>x\}~\sim~ n\mmp(\xi>x)$$
provided that $x>(\alpha-2+\delta)^{1/2}(n\log n)^{1/2} % \sqrt{n\log n}
$ for some $\delta>0$. Using this with $n=\lfloor v+ ta(v)\rfloor$ and recalling the definition of $a %(v)
$, we obtain
\begin{equation}\label{eq:limit2}
\lim_{v\to\infty} a(v)\mmp\{S_{\lfloor v+ ta(v)\rfloor}-\mu\lfloor v+ta(v)\rfloor>a(v)y\}=y^{-\alpha}.
\end{equation}
%d $\lim_{v\to\infty}a(v)=\infty$. The latter follows from the fact that the function $a$ is regularly varying at $\infty$ of index $1/(\alpha-1)$. The condition $$\lim_{n\to\infty}\frac{n\int_{[0,\,t_n]}x^2{\rm d}\mmp\{|\xi-\mu|\leq x\}\log t_n}{t_n^2}=0$$ is justified by $\lim_{n\to\infty}\int_{[0,\,t_n]}x^2{\rm d}\mmp\{|\xi-\mu|\leq x\}={\rm Var}\,[\xi]<\infty$ and $\lim_{v\to\infty}\frac{v\log a(v)}{(a(v))^2}=0$. The latter holds true in the case $\alpha\in (2,3)$, for the function $x\mapsto (a(x))^{-2}x\log a(x)$ is regularly varying at $\infty$ of the negative index $(\alpha-3)/(\alpha-1)$. If $\alpha=3$, then, in view of \eqref{eq:inter23232}, $$\frac{v\log a(v)}{(a(v))^2}~\sim~\frac{\log v}{2 (L(v))^2}~\to~ 0,\quad v\to\infty.$$ The proof of \eqref{eq:limit2} is complete.

As in the proof of Proposition~\ref{prop:vague}, it suffices to show that
\begin{multline*}
\lim_{v\to\infty} \me\Big[\exp\Big(-\sum_{k\geq 1}f\big((k-v)/a(v), (\hat S_k-\mu k)/a(v)\big)\1_{\{\hat S_k>\mu k\}}\Big)\Big]\\=\exp\Big(-\int_{\mr \times (0,\infty)} (1-\eee^{-f(t,y)}){\rm d}t\, \nu_\alpha({\rm d}y)\Big)
\end{multline*}
for any nonnegative continuous function $f$ on $\mr\times \mr$ with compact support in $\mr\times (0,\infty]$ (that is, $f(x,y)=0$ for $y\leq 0$). % The proof is analogous to the proof of Proposition \ref{prop:vague}. Hence, we only give a sketch.
For fixed $f$, there exist $\lambda, \gamma>0$ such that $f(t,y)=0$ whenever either $|t|>\lambda$ or $y<\gamma$. Using this and the independence of random variables
$\hat S_1$, $\hat S_2,\ldots$ we obtain
\begin{multline*}
\log \me\Big[\exp\Big(-\sum_{k\geq 1}f((k-v)/a(v), (\hat S_k-\mu k)/a(v))\1_{\{\hat S_k>\mu k\}}\Big)\Big]\\=-\sum_{k\geq 1}\log\big(\me\big[\exp(-f((k-v)/a(v), (S_k-\mu k)/a(v)))\1_{\{S_k>\mu k\}}\big]+\mmp\{S_k\leq \mu k\}\big)\\=-\sum_{k\geq 1}\log \Big(1-\int_\mr (1-\eee^{-f((k-v)/a(v),y)})\mmp\{(S_k-\mu k)/a(v)\in {\rm d}y\}\Big)\\=-\sum_{k\geq 1}\log \Big(1-\int_{(\gamma,\infty)}(1-\eee^{-f((k-v)/a(v),y)})\mmp\{(S_k-\mu k)/a(v)\in {\rm d}y\}\Big).
\end{multline*}
Furthermore, for large $v$,
\begin{multline*}
\sum_{k\geq 1}\int_{(\gamma,\infty)}(1-\eee^{-f((k-v)/a(v),y)})\mmp\{(S_k-\mu k)/a(v)\in {\rm d}y\}\\=\int_{-\lambda}^\lambda \int_{(\gamma,\infty)}(\eee^{-f(t,y)}-\eee^{-f((\lfloor v+ta(v)\rfloor-v)/a(v),y)})a(v)\mmp\{(S_{\lfloor v+ta(v)\rfloor}-\mu \lfloor v+ta(v)\rfloor)/a(v)\in {\rm d}y\}{\rm d}t\\+\int_{-\lambda}^\lambda \int_{(\gamma,\infty)}(1-\eee^{-f(t,y)})a(v)\mmp\{(S_{\lfloor v+ta(v)\rfloor}-\mu\lfloor v+ta(v)\rfloor)/a(v)\in {\rm d}y\}{\rm d}t=:I(v)+J(v).
\end{multline*}
Relations \eqref{eq:limit} and $\lim_{v\to \infty}\sup_{t\in [-\lambda,\lambda]}\,(t-(\lfloor v+ta(v)\rfloor-v)/a(v))=0$ together with uniform continuity of $\eee^{-f}$ entail
\begin{multline*}
|I(v)|\\\leq \omega_{\eee^{-f}}(\sup_{t\in [-\lambda,\lambda]}\,(t-(\lfloor v+ta(v)\rfloor-v)/a(v)))\int_{-\lambda}^\lambda a(v)\mmp\{S_{\lfloor v+ta(v)\rfloor}-\mu \lfloor v+ta(v)\rfloor>a(v)\gamma\}{\rm d}t\\~\to~0,\quad v\to\infty.
\end{multline*}
We omit a proof of the limit relation
$$\lim_{v\to\infty}J(v)=\int_{\mr \times (0,\infty)} (1-\eee^{-f(t,y)}){\rm d}t\, \nu_\alpha({\rm d}y).$$ It can be justified along the lines of the proof of the corresponding fragment of Proposition \ref{prop:vague}.

Referring back to the proof of Proposition \ref{prop:vague} we conclude that a counterpart of \eqref{eq:neglig} has to be obtained, whereas the remainder of the proof mimics that of Proposition \ref{prop:vague}. Thus, we are left with showing that
\begin{equation*} %\label{eq:neglig}
\lim_{v\to\infty}\sup_{k\geq 1}\int_{(\gamma,\infty)}\big(1-\eee^{-f((k-v)/a(v),y)}\big)\mmp\{(S_k-\mu k)/a(v)\in {\rm d}y\}=0.
\end{equation*}
%The integral is equal to $0$ for large $v$ and integer $k$ $k\geq \lfloor\lambda v\rfloor+1$.
%Recall that $a$ varies regularly at $\infty$ of index $1/(\alpha-1)$. By Theorem 1.8.3 in \cite{Bingham+Goldie+Teugels:1989}, there exists a continuous and strictly increasing function $b$ satisfying $b(v)\sim a(v)$ as $v\to\infty$. As a consequence, $\lim_{v\to\infty}vb(v)\mmp\{\xi>b(v)\}=1$. Thus, without loss of generality we can and do assume that $a$ is continuous and strictly increasing, so that the inverse function $a^{-1}$ is well-defined.
This is a consequence of
\begin{multline*}
\sup_{v-\lambda a(v)\leq k\leq v+\lambda a(v)}\,\int_{(\gamma,\infty)}\big(1-\eee^{-f((k-v)/a(v),y)}\big)\mmp\{(S_k-\mu k)/a(v)\in {\rm d}y\}\\\leq \mmp\Big\{\frac{\sup_{1 \leq k\leq v+\lambda a(v)}\,\big(S_k-\mu k\big)}{v^{1/2}}>\frac{\gamma a(v)}{v^{1/2}}\Big\} ~\to~0,\quad v\to\infty.
\end{multline*}
Here, the last relation is secured by $\lim_{v\to\infty} v^{-1/2}a(v)=\infty$ and the weak convergence as $v\to\infty$ of $\frac{\sup_{1 \leq k\leq v+\lambda a(v)}\,(S_k-\mu k)}{({\rm Var}\,[\xi]v)^{1/2}}$ to the absolute value of a random variable with the standard normal distribution.

The proof of Proposition \ref{prop:vague3} is complete.
\end{proof}

\begin{proof}[Proof of Theorem \ref{thm:03}]
We start with a representation $$\frac{\max_{1\leq k\leq v+ta(v)}\,\hat S_k-\mu v}{a(v)}=\max_{k:~(k-v)/a(v)\leq t}\,\Big(\frac{\mu (k-v)}{a(v)}+\frac{\hat S_k-\mu k}{a(v)}\Big),$$ which holds for each $t\in\mr$ and sufficiently large $v$. In view of this, we intend to apply Proposition \ref{deterministic1} with $f(x)=\mu x$ for $x\in\mr$ and $\rho_n=\sum_k \varepsilon_{\{(k-v_n)/a(v_n), (\hat S_k-\mu k)/a(v_n)\}}$, where $(v_n)_{n\geq 1}$ is any sequence of positive numbers diverging to $\infty$. It can be checked that $\mathcal{P}_\alpha^{(2)}$ satisfies with probability one all the assumptions imposed on $\rho_0$ in Proposition \ref{deterministic1}. The weak convergence stated in \eqref{eq:basic03} then follows from Proposition \ref{prop:vague3} in combination with the Skorokhod representation theorem, and Proposition \ref{deterministic1}.

Finally, we discuss the marginal distributions of the limit process $X_2$: for $t\in\mr$ and $y>\mu t$,
\begin{multline*}
\mmp\big\{\sup_{k:~t_k \leq t}\,(\mu t_k+ j_k)\leq y\big\}=\mmp\big\{\mathcal{P}_\alpha^{(2)}\big((s,x): s\leq t, \mu s+x>y\big)=0\big\}\\=\exp\big(-\me \big[\mathcal{P}_\alpha^{(2)}\big((s,x): s\leq t, \mu s+x>y\big)\big]\big). %=\exp(-t^2y^{-\alpha}/2).
\end{multline*}
Further,
\begin{multline*}
\me \big[\mathcal{P}_\alpha^{(2)}\big((s,x): s\leq t, \mu s+x>y\big)\big]=\int_{-\infty}^t\int_{[0,\,\infty)}\1_{\{\mu s+x>y\}}\nu_\alpha({\rm d}x){\rm d}s=\int_{-\infty}^t (y-\mu s)^{-\alpha}{\rm d}s\\=(\mu(\alpha-1))^{-1}(y-\mu t)^{1-\alpha}.
\end{multline*}
\end{proof}

\subsection{Proof of Theorem \ref{thm:05}}

\begin{assertion}\label{prop:vague5}
Under the assumptions of Theorem \ref{thm:05}, $$\sum_{k\geq 1}\varepsilon_{((k-v)/a(v), (\hat S_k-m(v))/a(v))}~\Longrightarrow~ \mathcal{P}^{(3)},\quad v\to\infty$$ on $M_p(\mr \times (-\infty,\infty])$.
\end{assertion}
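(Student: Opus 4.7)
The plan is to proceed in the same spirit as the proofs of Propositions \ref{prop:vague} and \ref{prop:vague3}: reduce the weak convergence on $M_p(\mr\times(-\infty,\infty])$ to convergence of Laplace functionals by Proposition~3.19 in \cite{Resnick:1987}, and then carry out the computation via the independence of the $\hat S_k$. Concretely, I would fix a nonnegative continuous $f$ with compact support in $\mr\times(-\infty,\infty]$, write
$$-\log\me\Big[\exp\Big(-\sum_{k\ge 1} f\big((k-v)/a(v),(\hat S_k-m(v))/a(v)\big)\Big)\Big]
 = -\sum_{k\ge 1}\log\Big(1-\int\big(1-\eee^{-f((k-v)/a(v),y)}\big)\mmp\{(S_k-m(v))/a(v)\in\dd y\}\Big),$$
and pass to the Riemann-integral approximation $\sum_k(\cdots)\approx \int_{-\lambda}^\lambda a(v)\,(\cdots)\,\dd t$ with $k=\lfloor v+ta(v)\rfloor$. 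The negligibility step $\sup_k(\cdots)\to 0$ and the bound $-\log(1-x)\le x+x^2$ then reduce everything to identifying the limit of $a(v)\mmp\{S_{\lfloor v+ta(v)\rfloor}-m(v)>ya(v)\}$.

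The key probabilistic input is therefore the moderate-deviation asymptotic
$$\lim_{v\to\infty} a(v)\mmp\{S_{\lfloor v+ta(v)\rfloor}-m(v)>ya(v)\}=\eee^{\mu t-y},\qquad (t,y)\in\mr^2.$$
To derive this I would normalize: with $n:=\lfloor v+ta(v)\rfloor$ and $x_n:=(m(v)+ya(v)-\mu n)/(\sigma\sqrt{n})$, direct computation using $n\sim v$, $a(v)/(\sigma\sqrt{v})=1/\sqrt{\log v}$, and the definition of $m(v)$ gives
$$x_n=\Phi^{-1}(1-1/a(v))+\frac{y-\mu t}{\sqrt{\log v}}+o\bigl(1/\sqrt{\log v}\bigr).$$
Then a Cram\'er-type moderate deviation result (e.g.\ Theorem~1.9 in \cite{Nagaev:1979}, already invoked in the proof of Proposition~\ref{prop:vague3}) yields
$\mmp\{(S_n-\mu n)/(\sigma\sqrt{n})>x_n\}\sim 1-\Phi(x_n)$ since $x_n\sim\sqrt{\log v}$ lies well inside the Gaussian regime. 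Finally, the elementary Mills-ratio identity
$1-\Phi(z+h/z)\sim \eee^{-h}(1-\Phi(z))$ as $z\to\infty$, applied with $z=\Phi^{-1}(1-1/a(v))\sim\sqrt{\log v}$ and $h=y-\mu t$, converts this to
$a(v)(1-\Phi(x_n))\to \eee^{\mu t-y}$, exactly the density of $\rho\times\nu$ at $(t,y)$.

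With this pointwise asymptotic in hand, dominated convergence applied as in \eqref{eq:inter} delivers
$\int_{-\lambda}^\lambda\int(1-\eee^{-f(t,y)})\rho(\dd t)\nu(\dd y)$ as the limit of the Riemann sum, and the $\log$-linearization is justified by the uniform smallness of each summand. The main obstacle I anticipate is the moderate-deviation step in the boundary tail case $\alpha=3$ with $\ell(v)/\log v\to 0$, where $\me[\xi^3]$ can fail to be finite and the classical Cram\'er series expansion is unavailable. There one must verify directly, via a Fuk--Nagaev decomposition, that the heavy-tail contribution $n\mmp\{\xi>\sigma\sqrt{n\log n}\}$ is $o(1-\Phi(\sqrt{\log n}))$ at the relevant scale, so that the Gaussian asymptotic still governs the probability $\mmp\{S_n-\mu n>\sigma\sqrt{n}\,x_n\}$. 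The technical asymmetry with Proposition~\ref{prop:vague3} is that now the support of $f$ extends to all of $(-\infty,\infty]$ in the second coordinate, but since any compact support in this coordinate is bounded below, the same negligibility bound using $\mmp\{\sup_{k\le v+\lambda a(v)}|S_k-\mu k|>\gamma a(v)\}\to 0$ (now via the CLT on both sides) disposes of the uniform control in $k$.
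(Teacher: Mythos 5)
Your overall strategy (Laplace functionals via Resnick's Proposition~3.19, Riemann approximation with $k=\lfloor v+ta(v)\rfloor$, and the moderate-deviation asymptotic $a(v)\mmp\{S_{\lfloor v+ta(v)\rfloor}>m(v)+ya(v)\}\to\eee^{\mu t-y}$) matches the paper's proof, and your Mills-ratio computation identifying the limit is correct. Two things go astray, however.

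First, and this is a genuine error: your proposed uniform negligibility bound does not hold. You claim that $\mmp\{\sup_{k\le v+\lambda a(v)}|S_k-\mu k|>\gamma a(v)\}\to 0$ by the CLT, ``as in Proposition~\ref{prop:vague3}.'' But in Proposition~\ref{prop:vague3} the scale satisfies $a(v)/v^{1/2}\to\infty$, which is exactly why the CLT forces the probability to $0$; here $a(v)=\sigma(v/\log v)^{1/2}=o(v^{1/2})$, so by the CLT $\sup_{k\le v}(S_k-\mu k)/v^{1/2}$ converges to a nondegenerate limit while $\gamma a(v)/v^{1/2}\to 0$, and the probability you wrote tends to $1$, not $0$. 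Moreover, the integrand involves $S_k-m(v)$, not $S_k-\mu k$; the relevant event is $S_k-\mu k>m(v)-\mu k+\gamma a(v)$, with threshold of order $(v\log v)^{1/2}\gg a(v)$, so your reduction is not even the right event. The correct uniform bound is the one the paper uses: since $\xi\ge 0$, $S_k$ is nondecreasing in $k$, hence $\max_{k\le v+\lambda a(v)}\mmp\{S_k>m(v)+\gamma a(v)\}\le\mmp\{S_{\lfloor v+\lambda a(v)\rfloor}>m(v)+\gamma a(v)\}\sim\eee^{\mu\lambda-\gamma}/a(v)\to 0$, directly from the moderate-deviation asymptotic itself.

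Second, you have the case analysis somewhat reversed. Nagaev's Theorem~1.9 is formulated for regularly varying tails, so it is available in the boundary case $\alpha=3$, $\ell(v)/\log v\to 0$ (exactly where you anticipated trouble), and it is there that one must verify, as you suggest, that the heavy-tail term $n\mmp\{\xi>x\}$ is negligible; that part of your proposal is fine. The case that actually requires a different tool is $\me[\xi^3]<\infty$ without any regular-variation assumption on the tail, where Nagaev's theorem does not apply directly; the paper handles it via Petrov's non-uniform Berry--Esseen bound (Theorem~14, p.~125 in \cite{Petrov:1975}), which gives a pointwise error $O(a(v)^{-1}(\log v)^{-3/2}\cdot a(v))=O((\log v)^{-3/2})$, hence negligible against $\eee^{\mu t-y}/a(v)\cdot a(v)$. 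You should swap your two cases accordingly and supply that Berry--Esseen step for the finite third moment case.
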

\begin{proof}

Without loss of generality, we can and do assume that $\sigma^2=1$.

We claim that, for fixed $t,y\in\mr$,
\begin{equation}\label{eq:inter3}
\lim_{v\to\infty}a(v)\mmp\{S_{\lfloor v+ta(v)\rfloor}>m(v)+ya(v)\}=\eee^{\mu t-y}.
\end{equation}
Assume first that $\mmp\{\xi>v\}\sim v^{-3}\ell(v)$ as $v\to\infty$ and $\lim_{v\to\infty}(\ell(v)/\log v)=0$. According to Theorem 1.9 in \cite{Nagaev:1979},
\begin{equation}\label{eq:nagaev}
\mmp\{S_n-\mu n>x\}=\big(1-\Phi(xn^{-1/2})\big)(1+o(1))+n\mmp\{\xi>x\}(1+o(1)),\quad n\to\infty
\end{equation}
uniformly in $x\geq n^{1/2}$. Using this with
\begin{equation}\label{eq:choice}
n=\lfloor v+ta(v)\rfloor\quad\text{and}\quad x=\frac{m(v)-\mu \lfloor v+ta(v)\rfloor+y a(v)}{(\lfloor v+ta(v)\rfloor)^{1/2}},
\end{equation}
we intend to show that the normal distribution tail dominates. To this end, we first prove that, for fixed $t,y\in\mr$,
\begin{equation}\label{eq:normal}
\lim_{v\to\infty}a(v)\Big(1-\Phi\Big(\frac{m(v)-\mu\lfloor v+ta(v)\rfloor+ya(v)}{(\lfloor v+ta(v)\rfloor)^{1/2}}\Big)\Big)
=\eee^{\mu t-y}.
\end{equation}
Indeed, as a consequence of $1-\Phi(x)\sim (2\pi)^{-1/2}x^{-1}\eee^{-x^2/2}$ as $x\to\infty$ and
\begin{equation}\label{eq:asymp}
\frac{m(v)-\mu v}{v^{1/2}}=\Phi^{-1}(1-1/a(v))~\sim~ (\log v)^{1/2},\quad v\to\infty
\end{equation}
(see Remark \ref{rem:expansion}) we obtain
\begin{multline*}
1-\Phi\Big(\frac{m(v)-\mu\lfloor v+ta(v)\rfloor+ya(v)}{(\lfloor v+ta(v)\rfloor)^{1/2}}\Big)~\sim~1-\Phi\Big(\frac{m(v)-\mu v}{v^{1/2}}+\frac{(y-\mu t)a(v)}{v^{1/2}}\Big)\\=
1-\Phi\Big(\Phi^{-1}(1-1/a(v))+\frac{(y-\mu t)a(v)}{v^{1/2}}\Big)\\~\sim~\frac{1}{(2\pi)^{1/2}\Phi^{-1}(1-1/a(v))}\eee^{-(\Phi^{-1}(1-1/a(v)))^2/2}\eee^{\Phi^{-1}(1-1/a(v))(\mu t-y)(\log v)^{-1/2}}\\~\sim~ 1-\Phi(\Phi^{-1}(1-1/a(v)))\eee^{\mu t-y}=\frac{\eee^{\mu t-y}}{a(v)},\quad v\to\infty.
\end{multline*}
On the other hand, using \eqref{eq:asymp} we conclude that
\begin{multline*}
a(v)v\mmp\{\xi>m(v)-\mu\lfloor v+ta(v)\rfloor+ya(v)\}~\sim~a(v)v\mmp\{\xi>m(v)-\mu v \}~\sim~\frac{v^{3/2}}{(\log v)^{1/2}}\frac{\ell((v\log v)^{1/2})}{(v\log v)^{3/2}}\\=\frac{\ell((v\log v)^{1/2})}{(\log v)^2}~\to~0,\quad v\to\infty.
\end{multline*}
Here, the last limit relation is secured by $\lim_{v\to\infty}\frac{\ell(v)}{\log v}=0$. We have proved that the contribution of the normal tail dominates, and \eqref{eq:inter3} follows.

Assume now that $\me [\xi^3]<\infty$. According to Theorem 14 on p.~125 in \cite{Petrov:1975}, the assumption $\me [\xi^3]<\infty$ ensures the existence of a universal constant $C>0$ such that, for all $x\in\mr$ and all $n\in\mn$,
$$
\Big|\mmp\{S_n-\mu n\leq xn^{1/2}\}-\Phi(x)\Big|
\leq
\frac{C\me [\xi^3]}{n^{1/2}(1+|x|^3)}.
$$
With the same choice of $n$ and $x$ as in \eqref{eq:choice} we infer
\begin{multline*}
a(v)\Big|\mmp\{S_{\lfloor v+ta(v)\rfloor}>m(v)+ya(v)\}-\Big(1-\Phi\Big(\frac{m(v)-\mu\lfloor v+ta(v)\rfloor+ya(v)}{(\lfloor v+ta(v)\rfloor)^{1/2}}\Big)\Big)\Big|\\=a(v)\Big|\mmp\Big\{S_{\lfloor v+ta(v)\rfloor}-\mu\lfloor v+ta(v)\rfloor\leq \frac{m(v)-\mu \lfloor v+ta(v)\rfloor+ya(v)}{(\lfloor v+ta(v)\rfloor)^{1/2}} (\lfloor v+ta(v)\rfloor)^{1/2}\Big\}\\-\Phi\Big(\frac{m(v)-\mu\lfloor v+ta(v)\rfloor+ya(v)}{(\lfloor v+ta(v)\rfloor)^{1/2}}\Big)\Big|\\\leq \frac{Ca(v)}{(\lfloor v+ta(v)\rfloor)^{1/2}}\Big(1+\frac{m(v)-\mu\lfloor v+ta(v)\rfloor+ya(v)}{(\lfloor v+ta(v)\rfloor)^{1/2}}\Big)^{-1}~\sim~ \frac{C}{(\log v)^2}~\to~0,\quad v\to\infty.
\end{multline*}
This shows that, with $t,y\in\mr$ fixed, as $v\to\infty$, $$a(v)\mmp\{S_{\lfloor v+ta(v)\rfloor}>m(v)+ya(v)\}~\sim~a(v)\Big(1-\Phi\Big(\frac{m(v)-\mu\lfloor v+ta(v)\rfloor+ya(v)}{(\lfloor v+ta(v)\rfloor)^{1/2}}\Big)~\to~\eee^{\mu t-y}.$$ Here, the last limit relation follows from \eqref{eq:normal}.

To prove the limit relation stated in Proposition \ref{prop:vague5} it is enough to check that
\begin{multline*}
\lim_{v\to\infty} \me\Big[\exp\Big(-\sum_{k\geq 1}f\big((k-v)/a(v), (\hat S_k-m(v))/a(v)\big)\Big)\Big]\\=\exp\Big(-\int_{\mr \times \mr} (1-\eee^{-f(t,y)})\eee^{\mu t-y}{\rm d}t\, {\rm d}y\Big)
\end{multline*}
for any nonnegative continuous function $f$ on $\mr\times \mr$ with compact support in $\mr\times (-\infty,\infty]$. In particular, for fixed $f$, there exist $\lambda>0$ and $\gamma\in\mr$ such that $f(t,y)=0$ whenever either $|t|>\lambda$ or $y<\gamma$.

The proof of the limit relation is analogous to the proof of Proposition \ref{prop:vague}. A slightly different argument is only needed for
\begin{equation*}
\lim_{v\to\infty}\sup_{k\geq 1}\int_{(\gamma,\infty)}\big(1-\eee^{-f((k-v)/a(v),y)}\big)\mmp\{(S_k-m(v))/a(v)\in {\rm d}y\}=0.
\end{equation*}
This is a consequence of
\begin{multline*}
\max_{v-\lambda a(v)\leq k\leq v+\lambda a(v)}\,\int_{(\gamma,\infty)}\big(1-\eee^{-f((k-v)/a(v),y)}\big)\mmp\{(S_k-m(v))/a(v)\in {\rm d}y\}\\\leq \max_{1 \leq k\leq v+\lambda a(v)}\,\mmp\{\big(S_k-m(v)\big)/a(v)>\gamma\}\leq
\mmp\{S_{\lfloor v+\lambda a(v)\rfloor}>m(v)+\gamma a(v)\}~\sim~ \eee^{\mu \lambda-\gamma}/a(v)~\to~0
\end{multline*}
as $v\to\infty$. Here, the penultimate limit relation is secured by the previous part of the proof.
\end{proof}

\begin{proof}[Proof of Theorem \ref{thm:05}]
Write $$\frac{\max_{1\leq k\leq \lfloor v+ta(v)\rfloor}\,\hat S_k-m(v)}{a(v)}=\max_{k\in\mn: (k-v)/a(v)\leq t}\frac{\hat S_k-m(v)}{a(v)}.$$ By the argument given on p.~214 in \cite{Resnick:1987} the functional $T:M_p(\mr\times (-\infty,\infty])\to D(\mr)$ defined by $$\Big(T\sum_{k}\varepsilon_{(\tau_k, y_k)}(t)\Big)_{t\in\mr}:=(\max_{k: \tau_k\leq t}\,y_k)_{t\in\mr}$$ is almost surely continuous at $\mathcal{P}^{(3)}$ in the $J_1$-topology. Hence, relation \eqref{eq:basic05} follows from this observation in combination with Proposition \ref{prop:vague5}.

To find the marginal distributions of the limit process $X_3$, write for $t,y\in\mr$,
\begin{equation*}
\me \big[\mathcal{P}^{(3)}\big((s,x): s\leq t, x>y\big)\big]=\int_{-\infty}^t \eee^{\mu s}{\rm d}s\int_y^\infty \eee^{-x}{\rm d}x=\mu^{-1}\eee^{\mu t-y}.
\end{equation*}
Hence,
\begin{multline*}
\mmp\big\{\sup_{k:~t_k \leq t}\,j_k \leq y\big\}=\mmp\big\{\mathcal{P}^{(3)}\big((s,x): s\leq t, x>y\big)=0\big\}\\=\exp\big(-\me \big[\mathcal{P}^{(3)}\big((s,x): s\leq t, x>y\big)\big]\big)=
\exp(-\mu^{-1}\eee^{\mu t-y}).
\end{multline*}
\end{proof}

\subsection{Proof of Theorem \ref{thm:06}}

\begin{assertion}\label{prop:vague6}
Under the assumptions of Theorem \ref{thm:06}, $$\sum_{k\geq 1}\varepsilon_{((k-v)/a(v), (\hat S_k-\mu k)/a(v))}~\Longrightarrow~ \mathcal{P}_3^{(2)},\quad v\to\infty$$ on $M_p(\mr \times ((2/A)^{1/2},\infty])$.
\end{assertion}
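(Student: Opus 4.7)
The proof is a routine adaptation of Proposition~\ref{prop:vague3}, with the only genuinely new technical input being a large deviation estimate tailored to the boundary tail $\mmp\{\xi > v\} \sim Av^{-3}\log v$. The plan is to establish
\begin{equation*}
\lim_{v\to\infty} a(v)\mmp\{S_{\lfloor v + ta(v)\rfloor} - \mu\lfloor v+ta(v)\rfloor > ya(v)\} = y^{-3}
\end{equation*}
for every $t \in \mr$ and every $y > (2/A)^{1/2}$, and then feed this into the same Laplace-functional machinery used for Propositions~\ref{prop:vague} and \ref{prop:vague3}.

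For the displayed limit I would invoke exactly the same fragment of Theorem~1.9 in \cite{Nagaev:1979} that appears in the proof of Proposition~\ref{prop:vague3}: for $\alpha=3$ one has $\mmp\{S_n - \mu n > x\} \sim n\mmp\{\xi > x\}$ whenever $x > (1+\delta)^{1/2}(n\log n)^{1/2}$ for some $\delta > 0$. Substituting $n = \lfloor v+ta(v)\rfloor$ and $x = ya(v)$ and recalling $a(v)^2 = (A/2) v\log v$, this hypothesis becomes $y > (2(1+\delta)/A)^{1/2}$, which is satisfied for every $y > (2/A)^{1/2}$ once $\delta$ is chosen small enough. Direct computation using $\mmp\{\xi > v\}\sim Av^{-3}\log v$ then gives
\begin{equation*}
a(v)\cdot v A (ya(v))^{-3}\log(ya(v))\sim \big((A/2)v\log v\big)^{1/2}\cdot v A y^{-3}\big((A/2)v\log v\big)^{-3/2}\cdot(1/2)\log v = y^{-3}.
\end{equation*}
The threshold $(2/A)^{1/2}$ is precisely the value at which the Nagaev hypothesis breaks down; below it the Gaussian fluctuations of $S_n-\mu n$ would cease to be dominated by the heavy-tail contribution, and this is exactly why the state space for the limiting point measure is truncated at $(2/A)^{1/2}$.

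With this in hand the remainder of the argument is a verbatim repetition of the proof of Proposition~\ref{prop:vague3}. By Proposition~3.19 in \cite{Resnick:1987} it suffices to compute the Laplace functional $\me[\exp(-\sum_k f((k-v)/a(v),(\hat S_k-\mu k)/a(v)))]$ for every nonnegative continuous $f$ of compact support in $\mr\times((2/A)^{1/2},\infty]$; any such $f$ vanishes outside some $[-\lambda,\lambda]\times[\gamma,\infty)$ with $\gamma > (2/A)^{1/2}$. Independence of the $\hat S_k$ factorises the functional; a Riemann-sum approximation based on the displayed large deviation converts the leading term into $\int_{\mr\times((2/A)^{1/2},\infty)}(1-\eee^{-f(t,y)})\,\mathrm{d}t\,\nu_3(\mathrm{d}y)$; and the uniform-in-$k$ negligibility analogue of \eqref{eq:neglig} follows from
\begin{equation*}
\sup_{1\le k\le v+\lambda a(v)}\mmp\{S_k-\mu k>\gamma a(v)\}\le \mmp\Big\{\sup_{1\le k\le v+\lambda a(v)}(S_k-\mu k)/v^{1/2}>\gamma a(v)/v^{1/2}\Big\}\to 0,
\end{equation*}
since $\gamma a(v)/v^{1/2}=\gamma((A/2)\log v)^{1/2}\to\infty$ while the rescaled supremum is tight by Donsker's invariance principle.

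The main, and honestly only, obstacle is tracking the constants in the heavy-tail computation carefully enough to pin down the exact threshold $(2/A)^{1/2}$ and to see it as the image under $y\mapsto ya(v)$ of the Nagaev cutoff $(1+\delta)^{1/2}(n\log n)^{1/2}$; once that balance is identified, the rest is mechanical.
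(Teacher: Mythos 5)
Your proof is correct and follows essentially the same route as the paper, but with one noteworthy difference in the key estimate. The paper invokes the two-term Nagaev asymptotic \eqref{eq:nagaev}, computes both the Gaussian term $a(v)\bigl(1-\Phi(ya(v)/v^{1/2})\bigr)$ and the heavy-tail term $\lfloor v+ta(v)\rfloor a(v)\mmp\{\xi>ya(v)\}$ separately, and thereby establishes the full trichotomy \eqref{eq:inter4} — the limit equals $y^{-3}$ for $y>(2/A)^{1/2}$, a finite nonzero constant at $y=(2/A)^{1/2}$, and $+\infty$ for $y<(2/A)^{1/2}$. You instead invoke only the simplified one-term fragment (the same one the paper uses in Proposition \ref{prop:vague3}), namely $\mmp\{S_n-\mu n>x\}\sim n\mmp\{\xi>x\}$ for $x>(1+\delta)^{1/2}(n\log n)^{1/2}$, and observe that with $x=ya(v)$, $n\sim v$, and $a(v)^2=(A/2)v\log v$, the hypothesis reads $y>(2(1+\delta)/A)^{1/2}$, which suffices for $y>(2/A)^{1/2}$ upon shrinking $\delta$. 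This is a legitimate and slightly more economical route for the Proposition itself, since only the regime $y>(2/A)^{1/2}$ is relevant on the state space $\mr\times((2/A)^{1/2},\infty]$. What you give up is the rest of \eqref{eq:inter4}: the paper deliberately records the divergence to $+\infty$ for $y<(2/A)^{1/2}$ because it is the key input to proving \eqref{eq:inter5} in the proof of Theorem \ref{thm:06}, so some version of that computation must ultimately be done anyway. Your handling of the Laplace-functional reduction, the Riemann-sum passage, and the negligibility estimate via Donsker and $\gamma a(v)/v^{1/2}=\gamma((A/2)\log v)^{1/2}\to\infty$ matches the paper's scheme from Proposition \ref{prop:vague3}.
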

\begin{proof}
We prove that, for fixed $t\in\mr$ and $y>0$,
\begin{align}\label{eq:inter4}
\nonumber
&\lim_{v\to\infty}a(v)\mmp\{S_{\lfloor v+ta(v)\rfloor}-\mu \lfloor v+ta(v)\rfloor>ya(v)\}\\
&\hspace{3cm}=
\begin{cases}
y^{-3}, &   \text{if } \ y>(2/A)^{1/2},   \\
(A/(4\pi))^{1/2}+(A/2)^{3/2}, & \text{if} \ y=(2/A)^{1/2}, \\
+\infty, & \text{if} \ y<(2/A)^{1/2}.
\end{cases}
\end{align}
We are going to use formula \eqref{eq:nagaev} with $n=\lfloor v+ta(v)\rfloor$ and $x=ya(v)$. The formula applies whenever the distribution right tail of $\xi$ is regularly varying at $\infty$ of index $-\theta$ for $\theta>2$, see Theorem 1.9 in \cite{Nagaev:1979}.

It can be checked that $\lim_{v\to\infty}va(v)\mmp\{\xi>a(v)\}=1$, whence $$\lim_{v\to\infty}\lfloor v+ta(v)\rfloor a(v)\mmp\{\xi>ya(v)\}=y^{-3},\quad y>0.$$ Further,
\begin{multline*}
a(v)\Big(1-\Phi\Big(\frac{ya(v)}{v^{1/2}}\Big)\Big)~\sim~\frac{a(v)}{(A\pi\log v)^{1/2}y}\eee^{-A (y^2/4)\log v}=\frac{1}{2^{1/2}yv^{Ay^2/4-1/2}}\\~\to~
\begin{cases}
0, &   \text{if } \ y>(2/A)^{1/2},   \\
(A/(4\pi))^{1/2}, & \text{if} \ y=(2/A)^{1/2}, \\
+\infty, & \text{if} \ y<(2/A)^{1/2}.
\end{cases}
\end{multline*}
With these at hand, an application of \eqref{eq:nagaev} yields \eqref{eq:inter4}.

We omit the remaining part of the proof, for it mimics the proof of Proposition \ref{prop:vague3}.
\end{proof}

\begin{proof}[Proof of Theorem \ref{thm:06}]
Put $A_v(t):=\{k\in\mn: k-v\leq ta(v),\ \hat S_k-\mu k\leq (2/A)^{1/2}a(v)\}$ and $B_v(t):=\{k\in\mn: k-v\leq ta(v),\ \hat S_k-\mu k> (2/A)^{1/2}a(v)\}$. We use a representation
\begin{multline*}
\frac{\max_{1\leq k\leq \lfloor v+ta(v)\rfloor}\,\hat S_k-\mu v }{a(v)}\\=\max\Big(\max_{k\in A_v(t)}\,\Big(\frac{\mu(k-v)}{a(v)}+\frac{\hat S_k-\mu k}{a(v)}\Big), \max_{k\in B_v(t)}\,\Big(\frac{\mu(k-v)}{a(v)}+\frac{\hat S_k-\mu k}{a(v)}\Big)\Big)\\=:\max (Y_{1,v}(t), Y_{2,v}(t)).
\end{multline*}
Proposition \ref{prop:vague6} in combination with the Skorokhod representation theorem and Proposition \ref{deterministic1} ensure that $$(Y_{2,v}(t))_{t\in\mr}~\Longrightarrow~(\sup_{k:\,t_k \leq t}\,(\mu t_k+j_k))_{t\in\mr},\quad v\to\infty$$ in the $J_1$-topology on $D(\mr)$. Thus, the claim in Theorem \ref{thm:06} follows if we can show that
\begin{equation}\label{eq:inter5}
(Y_{1,v}(t))_{t\in\mr}~\Longrightarrow~(\mu t+(2/A)^{1/2})_{t\in\mr},\quad v\to\infty
\end{equation}
in the $J_1$-topology on $D(\mr)$.

Since, for each $v>1$, $Y_{1,v}$ is a.s.\ nondecreasing, and the limit process is continuous, it suffices to prove weak convergence of the one-dimensional distributions. Fix any $t\in\mr$. Since $Y_{1,v}(t)\leq \mu t+(2/A)^{1/2}$ a.s., we are left with checking that $\lim_{v\to\infty}\mmp\{Y_{1,v}(t)\leq y\}=0$ for $y<\mu t+(2/A)^{1/2}$. To this end, write
\begin{multline*}
\mmp\{Y_{1,v}(t)\leq y\}=\prod_{k=1}^{\lfloor v+ta(v)\rfloor}\mmp\{S_k\leq \mu v+ya(v),\, S_k\leq \mu k+(2/A)^{1/2}a(v)\}\\=\prod_{v+\mu^{-1}(y-(2/A)^{1/2}) a(v)\leq k\leq v+ta(v)}\mmp\{S_k\leq \mu v+ya(v)\}\\\times \prod_{1\leq k<v+\mu^{-1}(y-(2/A)^{1/2}) a(v)}\mmp\{S_k\leq \mu k+(2/A)^{1/2}a(v)\}\\\leq \prod_{v+\mu^{-1}(y-(2/A)^{1/2}) a(v)\leq k\leq v+ta(v)}\mmp\{S_k\leq \mu v+ya(v)\}\\\leq \exp\Big(-\sum_{v+\mu^{-1}(y-(2/A)^{1/2}) a(v)\leq k\leq v+ta(v)}\mmp\{S_k>\mu v+ya(v)\}\Big).
\end{multline*}
It remains to prove that
\begin{multline*}
\sum_{v+\mu^{-1}(y-(2/A)^{1/2}) a(v)\leq k\leq v+ta(v)}\mmp\{S_k>\mu v+ya(v)\}\\=a(v)\int_{(\lfloor v+\mu^{-1}(y-(2/A)^{1/2})a(v)\rfloor-v)/a(v)}^{(\lfloor v+ta(v)\rfloor-v+1)/a(v)} \mmp\{S_{\lfloor v+xa(v)\rfloor}-\mu\lfloor v+xa(v)\rfloor>\mu v-\mu\lfloor v+xa(v)\rfloor+ya(v)\}{\rm d}x\\~\to~ \infty,\quad v\to\infty.
\end{multline*}
According to \eqref{eq:inter4}, $$\lim_{v\to\infty}a(v)\mmp\{S_{\lfloor v+xa(v)\rfloor}-\mu\lfloor v+xa(v)\rfloor>(y-\mu x)a(v)\}=\infty$$ whenever $y-\mu x<(2/A)^{1/2}$. By Fatou's lemma,
\begin{multline*}
\liminf_{v\to\infty}a(v)\\\times \int_{(\lfloor v+\mu^{-1}(y-(2/A)^{1/2})a(v)\rfloor-v)/a(v)}^{(\lfloor v+ta(v)\rfloor-v+1)/a(v)} \mmp\{S_{\lfloor v+xa(v)\rfloor}-\mu\lfloor v+xa(v)\rfloor>\mu v-\mu\lfloor v+xa(v)\rfloor+ya(v)\}{\rm d}x\\\geq \int_{\mu^{-1}(y-(2/A)^{1/2})}^t \liminf_{v\to\infty}a(v)\mmp\{S_{\lfloor v+xa(v)\rfloor}-\mu\lfloor v+xa(v)\rfloor>(y-\mu x)a(v)\}{\rm d}x=\infty.
\end{multline*}
This completes the proof of \eqref{eq:inter5}.

Passing to the discussion of the marginal distributions of $X_4$ we first observe that $X_4(t)\geq \mu t+(2/A)^{1/2}$ a.s., so that $\mmp\{X_4(t)\leq y\}=0$ for $t\in\mr$ and $y<\mu t+(2/A)^{1/2}$. Since we assume that the distribution functions are right-continuous, it suffices to consider the case $t\in\mr$ and $y> \mu t+(2/A)^{1/2}$. For this range, we may copy a formula from the proof of Theorem \ref{thm:03}: $\me \big[\mathcal{P}_3^{(2)}\big((s,x): s\leq t, \mu s+x>y\big)\big]=(2\mu)^{-1}(y-\mu t)^{-2}$. The claimed formula for $\mmp\{X_4(t)\leq y\}$ in this range is an immediate consequence.
\end{proof}

\subsection{Proof of Theorem \ref{thm:02tau}}

Recall that $a$ varies regularly at $\infty$ of index $2/\alpha$ for $\alpha\in (0,2]$. 
%where $\alpha\le 2$. 
By Theorem 1.8.3 in \cite{Bingham+Goldie+Teugels:1989}, there exists a continuous and strictly increasing function $b$ satisfying $b(v)\sim a(v)$ as $v\to\infty$. As a consequence, $\lim_{v\to\infty}v^2\mmp\{\xi>b(v)\}=1$. Thus, without loss of generality we can and do assume that $a$ is continuous and strictly increasing.

Assume that $\alpha\in (0,2)$ or $\alpha=2$ and $\lim_{v\to\infty}\ell(v)=\infty$. By the Skorokhod representation theorem, after passing to a suitable probability space we may replace the weak convergence in Theorem~\ref{thm:02tau} (respectively, Proposition~\ref{prop:vague}) by almost sure convergence of random elements in the Skorokhod space $D([0,\infty))$ equipped with the $J_1$-topology (respectively, in $M_p([0,\infty)\times(0,\infty])$ equipped with the vague topology). A general result of Whitt~\cite{Whitt:1971} states that the map which assigns to a function $f\in D([0,\infty))$ with $\limsup_{t\to\infty} f(t)=\infty$ its generalized inverse $f^{\leftarrow}$ is continuous in the $M_1$-topology but not in the $J_1$-topology (for a counterexample, see p.~419 in~\cite{Whitt:1971}). Recall also that the $M_1$-topology is weaker than the $J_1$-topology. Consequently, starting from Theorem~\ref{thm:02tau} and passing to the associated first-passage-time processes yields
\begin{multline}
\big(v^{-1}\hat\tau(ta(v))\big)_{t\ge 0}
=\big(\inf\{z\ge 0:\ \hat S_{\lfloor zv\rfloor}>ta(v)\}\big)_{t\ge 0}
\\ \longrightarrow\
\big(\inf\big\{z\ge 0:\ \sup_{k:\,t_k\le z} j_k>t\big\}\big)_{t\ge 0}
=\big(\inf\{t_k:\ j_k>t\}\big)_{t\ge 0},\qquad v\to\infty
\label{eq:in2}
\end{multline}
on $D([0,\infty))$ endowed with the $M_1$-topology. To show that the same convergence holds in the $J_1$-topology, we invoke Lemma~2.4(b) in~\cite{Iksanov+Pilipenko+Samoilenko:2017}. Recall that $X_1(t)=\sup_{k:\,t_k\le t} j_k$ and $X_1^{\leftarrow}(j)=\inf\{t_k:\ j_k>j\}$. Let $j'$ be a discontinuity point of $X_1^{\leftarrow}$; equivalently, $j'$ is a value attained on a constancy interval of $X_1$. More precisely, there exist two atoms $(t',j')$ and $(t'',j'')$ of the point process $\sum_{k\ge1}\delta_{(t_k,j_k)}$ such that $X_1(t)=j'$ for all $t\in[t',t'')$, $X_1(t'-)<j'$, and $X_1(t'')>j'$. By Proposition~\ref{prop:vague},
\[
\sum_{\ell\ge1}\varepsilon_{(\ell/v,\ \hat S_\ell/a(v))}\ \longrightarrow\ \sum_{k\ge1}%\delta_{(t_k,j_k)}
\varepsilon_{(t_k,j_k)}
\]
vaguely as $v\to\infty$. Hence there exist atoms $(t'(v),j'(v))$ and $(t''(v),j''(v))$ of $\sum_{\ell\ge1}\varepsilon_{(\ell/v,\ \hat S_\ell/a(v))}$ converging to $(t',j')$ and $(t'',j'')$, respectively, as $v\to\infty$, and for all sufficiently large $v$ the first-passage-time process $\big(\inf\{z\ge0:\ \hat S_{\lfloor zv\rfloor}>ja(v)\}\big)_{j\ge0}$ has a jump at $j=j'(v)$ with left limit $t'(v)$ and right limit $t''(v)$. Since $t'(v)\to t'$ and $t''(v)\to t''$, the conditions of Lemma~2.4(b) in~\cite{Iksanov+Pilipenko+Samoilenko:2017} are satisfied, and the $M_1$-convergence in~\eqref{eq:in2} can be lifted to $J_1$-convergence.

%As discussed in the proof of Theorem \ref{thm:02}, $\mathcal{P}_\alpha^{(1)}$ satisfies with probability one all the assumptions imposed on $\rho_0$ in Proposition \ref{deterministic1}. An application of Proposition \ref{deterministic1} {\tt The needed part of Proposition \ref{deterministic1} has not been derived yet!!!} with $f(x)=0$ for $x\geq 0$ in combination with Proposition \ref{prop:vague} and the Skorokhod representation theorem yields
%\begin{multline}
%(v^{-1}(\hat \tau(ta(v))))_{t\geq 0}=\big(\inf\{z\geq 0: \hat S_{\lfloor zv\rfloor}>ta(v)\}\big)_{t\geq 0}\\~\Longrightarrow~ \big(\inf\big\{z \geq 0: \sup_{k:~t_k \leq z}\, j_k >t\big\}\big)_{t\geq 0}=\big(\inf\{t_k:~j_k>t\}\big)_{t\geq 0},\quad v\to\infty \label{eq:in2}
%\end{multline}
%in the $J_1$-topology on $D([0,\infty))$.

Let $a^{-1}$ be the inverse function of $a$. Observe that $a$ is asymptotic generalized inverse of $x\mapsto 1/(\mmp\{\xi>x\})^{1/2}$, which particularly implies that $a^{-1}(v)\sim 1/(\mmp\{\xi>v\})^{1/2}$ as $v\to\infty$. Substituting now $a^{-1}(v)$ in place of $v$ on the left-hand side of \eqref{eq:in2} we arrive at \eqref{eq:inverseextremal}. The argument in the case $\alpha=2$ and $\liminf_{v\to\infty}\ell(v)<\infty$ is analogous.

For $s,u\geq 0$, put $F(s, u):=\mmp\{X_1(s)\leq u\}=\exp(-s^2 u^{-\alpha}/2)$, where the last equality follows from Theorem \ref{thm:02}. Since $X^\leftarrow_1$ is a nonnegative process, we conclude that $\mmp\{X^\leftarrow_1(t)\leq y\}=0$ for $y<0$. For $y\geq 0$, formula \eqref{eq:marginaltau} is a consequence of $\mmp\{X^\leftarrow_1(t)\leq y\}=1-F(y,t)$.

\subsection{Proofs of Theorems \ref{thm:03tau} and \ref{thm:06tau} %Proof of Theorem \ref{thm:03tau}
}

We only give a complete proof of Theorem \ref{thm:03tau}. Theorem \ref{thm:06tau} follows along similar lines.  

%The functional $Q: M_p(\mr\times (-\infty,\infty])\to D(\mr)$ defined by $$\Big(Q\sum_{k}\varepsilon_{(\tau_k, y_k)}(t)\Big)_{t\in\mr}:=\big(\inf\{z\in\mr: \sup_{k:\,\tau_k \leq z}\,y_k> t\}\big)_{t\in\mr}=
%(\inf\{\tau_k: y_k>t\})_{t\in\mr}$$ is almost surely continuous at $\mathcal{P}^{(4)}$ in the $J_1$-topology, see the proof of Corollary 4.21 on p.~216 in \cite{Resnick:1987}.
Using Proposition \ref{prop:vague3} in combination with the result of Whitt~\cite{Whitt:1971} mentioned above,  % this observation
we conclude with the help of the continuous mapping theorem that
\begin{multline}
\Big(\frac{\hat \tau(\mu v+ta(v))-v}{a(v)}\Big)_{t\in\mr %\geq 0
}=\big(\inf\{z\in\mr: \max_{1\leq k\leq \lfloor v+za(v)\rfloor}\,\hat S_k>\mu v+ta(v)\}\big)_{t\in\mr}\\~\Longrightarrow~ \big(\inf\big\{z\in\mr: \sup_{k:~t_k \leq z}\, j_k >t\big\}\big)_{t\in\mr},\quad v\to\infty \label{eq:in3}
\end{multline}
in the $M_1$-topology on $D(R)$. Arguing as in the proof of Theorem \ref{thm:02tau}, this $M_1$-convergence can be lifted to $J_1$-convergence.
Substituting now $\mu^{-1}v$ in place of $v$ on the left-hand side of \eqref{eq:in3} and recalling that the function $a$ is regularly varying at $\infty$ of index $1/(\alpha-1)$ we obtain \eqref{eq:inverseextremal3}.

Fix any $t\in\mr$. Since $X_2(t)>\mu t$ a.s., we infer $X_2^\leftarrow(t)<\mu^{-1}t$ a.s. Hence, $\mmp\{X^\leftarrow_2(t)\leq y\}=1$ for $y\geq \mu^{-1}t$. For $y<\mu^{-1}t$, formula \eqref{eq:marginaltau03} follows from \eqref{eq:marginaltau} and $\mmp\{X^\leftarrow_2(t)\leq y\}=\mmp\{X_2(y)>t\}$.

\subsection{Proof of Theorem \ref{thm:05tau}}

The functional $Q: M_p(\mr\times (-\infty,\infty])\to D(\mr)$ defined by $$\Big(Q\sum_{k}\varepsilon_{(\tau_k, y_k)}(t)\Big)_{t\in\mr}:=\big(\inf\{z\in\mr: \sup_{k:\,\tau_k \leq z}\,y_k> t\}\big)_{t\in\mr}=
(\inf\{\tau_k: y_k>t\})_{t\in\mr}$$ is almost surely continuous at $\mathcal{P}^{(3)}$ in the $J_1$-topology, see the proof of Corollary 4.21 on p.~216 in \cite{Resnick:1987}. Using Proposition \ref{prop:vague5} in combination with this observation we conclude with the help of the continuous mapping theorem that
\begin{multline*}
\Big(\frac{\hat \tau(m(v)+ta(v))-v}{a(v)}\Big)_{t\in\mr}=\big(\inf\{z\in\mr: \max_{1\leq k\leq \lfloor v+za(v)\rfloor}\,\hat S_k>m(v)+ta(v)\}\big)_{t\in\mr}\\~\Longrightarrow~ \big(\inf\big\{z\in\mr: \sup_{k:~t_k \leq z}\, j_k >t\big\}\big)_{t\in\mr}=\big(\inf\{t_k: ~ j_k >t\}\big)_{t\in\mr},\quad v\to\infty
\end{multline*}
in the $J_1$-topology on $D(R)$.

The formula for the marginal distributions of $X_3^\leftarrow$ follows as in the proof of Theorem \ref{thm:03tau}.

\section{Appendix}

We give a deterministic continuity result which is essentially used when passing from the vague convergence of random point measures to the weak convergence of maxima on the Skorokhod space. The result is a slight extension of Theorem 1.3.17 in \cite{Iksanov:2016} which covers point measures on both $[0,\infty) \times (-\infty, \infty]$ and $\mr\times (-\infty, \infty]$ rather than just on $[0,\infty)\times (-\infty, \infty]$.

Let $I$ denote either $\mr$ or $[0,\infty)$ and $M^\ast_p$ denote the set of % Radon 
point measures $\rho$ on $I \times (-\infty,\infty]$ which satisfy  
\begin{equation}\label{1_0_1}
\rho(I_T \times\left((-\infty,-\delta]\cup[\delta,\infty]\right))<\infty
\end{equation}
for all $\delta>0$ and all $T>0$, where $I_T=[0,T]$ if $I=[0,\infty)$ and $I_T=[-T, T]$ if $I=\mr$. The set $M^\ast_p$ is endowed with
the vague topology. For a continuous function $f:I\to\mr$, define the mapping $\mathcal{F}$ from $M^\ast_p$ to $D(I)$ by
\begin{equation*}
\mathcal{F}\left(\rho\right)(t):=
\begin{cases}
        \underset{k:\ \theta_k\leq t}{\sup}(f(\theta_k)+y_k),  & \text{if} \ \theta_k\leq t \ \text{for
some} \ k,\\
        f(0), & \text{otherwise},
\end{cases}
\end{equation*}
where $\rho = \sum_k \varepsilon_{(\theta_k,\,y_k)}$. Assumption
\eqref{1_0_1} ensures that $\mathcal{F}(\rho)\in D(I)$. If \eqref{1_0_1}
does not hold, $\mathcal{F}(\rho)$ may lose right-continuity.

\begin{assertion}\label{deterministic1}
For $n\in\mn$, let $\rho_n\in M_p$. Assume that
\begin{itemize}

\item $\rho_0(I\times(-\infty,0])=0$ and, if $I=[0,\infty)$, that $\rho_0(\{0\}\times (-\infty,+\infty])=0$,

\item $\rho_0((r_1,r_2)\times(0,\infty])\geq 1$ for all $r_1, r_2\in I$ such that $r_1<r_2$,

\item $\rho_0 = \sum_k \varepsilon_{\big(\theta^{(0)}_k,\,y^{(0)}_k\big)}$ does not have clustered jumps, that is, $\theta^{(0)}_k\neq \theta^{(0)}_j$ for $k\neq j$.
\end{itemize}

\noindent If
\begin{equation*}
\lim_{n\to\infty} \rho_n|_{I\times(0,\infty]}=\rho_0
\end{equation*}
on $M^\ast_p$, then
\begin{equation*}
\lim_{n\to\infty} \mathcal{F}(\rho_n)= \mathcal{F}(\rho_0)
\end{equation*}
in the $J_1$-topology on $D(I)$.
\end{assertion}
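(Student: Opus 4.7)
The plan is to reduce the $J_1$-convergence to uniform convergence after a suitable time change, constructed by matching the ``large'' atoms of $\rho_n$ with those of $\rho_0$; the contribution of the ``small'' atoms is then controlled by deriving a uniform density property for $\rho_n$ from assumption~2. I treat $I=\mr$; the case $I=[0,\infty)$ is entirely analogous.

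Since $J_1$-convergence on $D(I)$ is characterized by $D(I_T)$-convergence for $T$ in a cocountable set, it suffices to fix $T$ such that $\mathcal{F}(\rho_0)$ is continuous at $\pm T$ and $\rho_0$ has no atom at $\pm T$. Pick also $\delta>0$ with $\rho_0(I_T\times\{\delta\})=0$. By~\eqref{1_0_1}, $\rho_0$ has only finitely many atoms in the compact window $I_T\times[\delta,\infty]$, say $K$ of them, ordered in time as $\theta_1^{(0)}<\cdots<\theta_K^{(0)}$ (distinct by assumption~3). Vague convergence on this window forces $\rho_n$ to have exactly $K$ atoms there for $n$ large, and after a suitable labeling $(\theta_i^{(n)},y_i^{(n)})\to(\theta_i^{(0)},y_i^{(0)})$ for each $i$. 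Let $\lambda_n:I_T\to I_T$ be the piecewise-linear homeomorphism fixing the endpoints and sending $\theta_i^{(n)}\mapsto\theta_i^{(0)}$; then $\sup_{t\in I_T}|\lambda_n(t)-t|\to 0$.

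Decompose $\mathcal{F}(\rho)=\max(\Phi_\delta(\rho),\Psi_\delta(\rho))$, where $\Phi_\delta(\rho)(t)$ is the max of $f(\theta_k)+y_k$ over atoms with $y_k\ge\delta$ and $\theta_k\le t$, and $\Psi_\delta(\rho)(t)$ the sup over those with $y_k<\delta$. Atom convergence and continuity of $f$ give $\Phi_\delta(\rho_n)\circ\lambda_n^{-1}\to\Phi_\delta(\rho_0)$ uniformly on $I_T$. For $\Psi_\delta(\rho_0)$, assumptions~1 and~2 imply that $\{\theta_k^{(0)}:0<y_k^{(0)}<\delta\}$ is dense in $I_T$, so $\Psi_\delta(\rho_0)(t)\in[f^*(t),f^*(t)+\delta]$ with $f^*(t):=\sup_{s\in I_T,\,s\le t}f(s)$, a continuous nondecreasing function. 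The crucial step is a \emph{uniform density lemma} for $\rho_n$: for every $\varepsilon>0$ there exist $\delta'\in(0,\delta)$ and $n_0$ such that, for $n\ge n_0$, every subinterval of $I_T$ of length $\varepsilon/2$ contains at least one atom of $\rho_n$ in $I_T\times[\delta',\infty]$. One obtains it by covering $I_T$ with finitely many length-$\varepsilon/4$ intervals $J_j$, invoking assumption~2 to pick thresholds $\delta_j>0$ with $\rho_0(J_j\times[\delta_j,\infty])\ge 1$, transferring each inequality to $\rho_n$ by vague convergence on the compact set $J_j\times[\delta_j,\infty]$, and setting $\delta'=\min_j\delta_j$, $n_0=\max_j n_j$. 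Picking such an atom $(\theta,y)$ in a length-$\varepsilon/2$ neighborhood of the argmax of $f$ on $I_T\cap(-\infty,\lambda_n^{-1}(t)]$ yields $\mathcal{F}(\rho_n)(\lambda_n^{-1}(t))\ge f(\theta)+y\ge f^*(\lambda_n^{-1}(t))-\omega_f(\varepsilon)$, where $\omega_f$ is the modulus of continuity of $f$ on $I_T$; the trivial upper bound $\Psi_\delta(\rho)\le f^*+\delta$, valid for any $\rho$, bounds the small-atom contribution from above.

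Combining these with $\mathcal{F}(\rho_0)(t)\in[\max(\Phi_\delta(\rho_0)(t),f^*(t)),\max(\Phi_\delta(\rho_0)(t),f^*(t)+\delta)]$ yields $\sup_{t\in I_T}|\mathcal{F}(\rho_n)(\lambda_n^{-1}(t))-\mathcal{F}(\rho_0)(t)|\le\delta+\omega_f(\varepsilon)+o(1)$ as $n\to\infty$. Given $\eta>0$, first choose $\delta,\varepsilon$ so that $\delta+\omega_f(\varepsilon)<\eta/2$, then $n_0$ so large that the $o(1)$ term is below $\eta/2$; combined with $\sup|\lambda_n-\mathrm{id}|\to 0$, this gives the desired $J_1$-convergence on $D(I_T)$, hence on $D(I)$. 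The main obstacle is the control of the small-atom part $\Psi_\delta(\rho_n)$: the hypothesis $\rho_n|_{I\times(0,\infty]}\to\rho_0$ imposes nothing on atoms of $\rho_n$ with $y$ close to $0$ or $\le 0$, so the uniform density lemma---which transfers the density assumption~2 on $\rho_0$ into a quantitative small-scale density statement for $\rho_n$---is what closes the argument.
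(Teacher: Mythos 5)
Your overall architecture is sound and reflects a standard route to this kind of continuity result: match the finitely many ``large'' atoms of $\rho_n$ with those of $\rho_0$ on a compact window and build the time change $\lambda_n$ from them, then control the ``small'' atom contribution. The \emph{uniform density lemma}, transferring assumption~2 on $\rho_0$ to a quantitative small-scale density for $\rho_n$ on $I_T$, is exactly the right device to handle the fact that the hypothesis $\rho_n|_{I\times(0,\infty]}\to\rho_0$ gives you no information about atoms of $\rho_n$ with second coordinate near $0$ (or negative). For $I=[0,\infty)$ your argument is essentially complete, modulo routine care near $t=0$.

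However, you explicitly chose to present the case $I=\mr$ and claim the other case is ``entirely analogous'' --- it is in fact the other way round, and your write-up has a genuine gap precisely where the two cases differ. For $I=\mr$, the value $\mathcal F(\rho_n)(\lambda_n^{-1}(t))$ for $t\in I_T=[-T,T]$ is a supremum over \emph{all} atoms with $\theta_k^{(n)}\le\lambda_n^{-1}(t)$, including those with $\theta_k^{(n)}<-T$. Vague convergence on $M^\ast_p(\mr\times(0,\infty])$ only controls atoms in compact rectangles $[-T',T']\times[\delta,\infty]$, so it says nothing about atoms of $\rho_n$ located in $(-\infty,-T)\times[\delta,\infty]$; they may even escape to $\theta\to-\infty$ with $y$ large. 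Consequently your claim that ``atom convergence gives $\Phi_\delta(\rho_n)\circ\lambda_n^{-1}\to\Phi_\delta(\rho_0)$ uniformly'' is unjustified (the matching concerns only the $K$ atoms inside $I_T\times[\delta,\infty]$), and your ``trivial upper bound'' $\Psi_\delta(\rho)\le f^*+\delta$ is false in general because you define $f^*(t)=\sup_{s\in I_T,\,s\le t}f(s)$ while the small atoms of $\rho_n$ can sit at $\theta<-T$ where $f$ may be large. Concretely, taking $f\equiv x$ and $\rho_n=\rho_0|_{\mr\times(0,\infty]}+\varepsilon_{(-n,\,n+100)}$ gives $\rho_n|_{\mr\times(0,\infty]}\to\rho_0$ vaguely (the extra atom leaves every compact) while $\mathcal F(\rho_n)\ge 100$ on $[-n,\infty)$, so $\mathcal F(\rho_n)\not\to\mathcal F(\rho_0)$. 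Some control on the behaviour of $\rho_n$ and of $f(\theta)+y$ as $\theta\to-\infty$ --- implicit in the paper's applications, where $f(x)=\mu x$ and the $\rho_n$ have a specific structure --- has to be invoked for $I=\mr$; your proof neither states nor uses it, and this is exactly the step that is \emph{not} analogous to $I=[0,\infty)$. (To be fair, the paper omits the proof and leaves this point implicit as well, but since you chose to treat $\mr$ head-on, the onus is on you to close it.)
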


The proof is omitted, for it only requires obvious modifications of the proof of Theorem 1.3.17 in \cite{Iksanov:2016}.

\noindent \textbf{Acknowledgment.} A part of this work was done while A.I. was visiting the University of Bielefeld and the University of M\"{u}nster in June 2025. Grateful acknowledgment is made for financial support and hospitality. Z.K. has been supported by the German Research Foundation under Germany's Excellence Strategy  EXC 2044/2 -- 390685587, Mathematics M\"unster: Dynamics - Geometry - Structure and by the DFG priority program SPP 2265 Random Geometric Systems. The work V.W. has been funded by the Deutsche Forschungsgemeinschaft (DFG, German Research Foundation) –
Project-ID 317210226 – SFB 1283

%The authors thank an anonymous referee for a very detailed report that helped to correct several errors and to improve the presentation.
%
%\noindent \textbf{Conflict of interest.} The authors have no competing interests to declare that are relevant to the content of this article.
%
%\noindent \textbf{Data availability.} Data sharing not applicable to this article as no datasets were generated or analysed during the current study.

\end{document}